\newtheorem{theorem}{Theorem}[section]
\newtheorem{lemma}[theorem]{Lemma}
\newtheorem{proposition}[theorem]{Proposition}
\newtheorem{definition}[theorem]{Definition}
\newtheorem{remark}[theorem]{Remark}
\newenvironment{customthm}[1]
  {\innercustomthm}
  {\endinnercustomthm}
\newcommand{\nv}{\boldnu}
\newcommand{\domain}{D}
\newcommand{\angvel}{\Omega}
\newcommand{\vel}{\boldv}
\newcommand{\dens}{\rho}
\newcommand{\densl}{\ul{\rho}}
\newcommand{\densu}{\ol{\rho}}
\newcommand{\pres}{p}
\newcommand{\grav}{\phi}
\newcommand{\dislag}{\boldu}
\newcommand{\dislagscal}{u}
\newcommand{\graveu}{\psi}
\newcommand{\bflow}{\boldb}
\newcommand{\qcoeff}{\boldq}
\newcommand{\sounds}{c_s}
\newcommand{\soundsl}{\ul{c_s}}
\newcommand{\soundsu}{\ol{c_s}}
\newcommand{\rchange}{\Gamma}
\newcommand{\damp}{\gamma}
\newcommand{\dampl}{\ul{\gamma}}
\newcommand{\dampu}{\ol{\gamma}}
\newcommand{\ses}{a}
\newcommand{\sescow}{\ses_\mathrm{Cow}}
\newcommand{\sesschur}{\ses_\mathrm{Schur}}
\newcommand{\sesadd}{\ses_{\Delta}}
\newcommand{\op}{A}
\newcommand{\opAcow}{\op_\mathrm{Cow}}
\newcommand{\opdecomp}{B}
\newcommand{\opR}{R}
\newcommand{\opEmbed}{E}
\newcommand{\opQ}{Q}
\newcommand{\hs}{\boldX}
\newcommand{\hsV}{\boldV}
\newcommand{\hsW}{\boldW}
\newcommand{\hsZ}{\boldZ}
\newcommand{\hsgeneric}{Y}
\newcommand{\ugeneric}{y}
\newcommand{\hsgrav}{\tilde H^1_*}
\newcommand{\blo}{L}
\newcommand{\compv}{\boldv}
\newcommand{\compvp}{v_0}
\newcommand{\compw}{\boldw}
\newcommand{\compz}{\boldz}
\newcommand{\PV}{P_\hsV}
\newcommand{\PW}{P_\hsW}
\newcommand{\PZ}{P_\hsZ}
\newcommand{\Pran}{P_{\ran\opdecomp^\bot}}
\newcommand{\Creg}{C_\mathrm{reg}}
\newcommand{\matgr}{\ul{\ul{m}}}
\newcommand{\matkl}{M}
\newcommand{\complsc}{\sigma}
\newcommand{\DLag}{\delta_{\mathrm{L}}}
\newcommand{\DEu}{\delta_{\mathrm{E}}}
\newcommand{\dd}{\mathrm{d}}
\newcommand{\ol}[1]{\overline{#1}}
\newcommand{\ul}[1]{\underline{#1}}
\newcommand{\spl}{\langle}
\newcommand{\spr}{\rangle}
\newcommand{\bpm}{\begin{pmatrix}}
\newcommand{\epm}{\end{pmatrix}}
\renewcommand{\div}{\operatorname{div}}
\DeclareMathOperator{\grad}{grad}
\DeclareMathOperator{\curl}{curl}
\DeclareMathOperator{\hess}{Hess}
\renewcommand{\dim}{\operatorname{dim}}
\DeclareMathOperator{\ran}{ran}
\DeclareMathOperator{\sign}{sgn}
\DeclareMathOperator{\numran}{num ran}
\newcommand{\boldnu}{\boldsymbol{\nu}}
\newcommand{\boldxi}{\boldsymbol{\xi}}
\newcommand{\setC}{\mathbb{C}}
\newcommand{\setR}{\mathbb{R}}
\newcommand{\boldb}{\mathbf{b}}
\newcommand{\boldf}{\mathbf{f}}
\newcommand{\boldg}{\mathbf{g}}
\newcommand{\boldq}{\mathbf{q}}
\newcommand{\boldu}{\mathbf{u}}
\newcommand{\boldv}{\mathbf{v}}
\newcommand{\boldw}{\mathbf{w}}
\newcommand{\boldx}{\mathbf{x}}
\newcommand{\boldy}{\mathbf{y}}
\newcommand{\boldz}{\mathbf{z}}
\newcommand{\boldA}{\mathbf{A}}
\newcommand{\boldG}{\mathbf{G}}
\newcommand{\boldH}{\mathbf{H}}
\newcommand{\boldL}{\mathbf{L}}
\newcommand{\boldV}{\mathbf{V}}
\newcommand{\boldW}{\mathbf{W}}
\newcommand{\boldX}{\mathbf{X}}
\newcommand{\boldZ}{\mathbf{Z}}
\newcommand{\calT}{\mathcal{T}}
\title[Well-posedness of time-harmonic Galbrun equation]{On the well-posedness of the damped time-harmonic Galbrun equation 
and the equations of stellar oscillations}
\author{Martin Halla and Thorsten Hohage}
\email{halla@mps.mpg.de}
\subjclass[2010]{{35Q35, 35Q85, 76Q05.}}
\keywords{Galbrun's equation, helioseismology, aeroacoustics, T-coercivity.}
\date{\today}
\begin{document}
\begin{abstract}
We study the time-harmonic Galbrun equation describing the propagation of sound in the presence of a steady background flow. 
With additional rotational and gravitational terms these equations are also fundamental in helio- and asteroseismology as a model 
for stellar oscillations. 
%This equation serves as a model for sound propagation in moving fluids in astrophysical and aeroacoustic applications. We propose a new way to model the damping of waves. 
For a simple damping model we prove well-posedness of these equations, i.e.\ uniqueness, existence, and stability of solutions
under mild conditions on the parameters (essentially subsonic flows). The main tool of our analysis is a generalized Helmholtz decomposition. 
\end{abstract}
\maketitle

\section{Introduction}\label{sec:introduction}

In this paper we study the equations for time-harmonic acoustic waves in the presence of a steady background flow $\bflow$ 
and damping effects. Then the Lagrangian perturbations $\dislag$ of displacement satisfy Galbrun's equation
\begin{subequations}\label{eqs:Galbrun}
\begin{align}\label{eq:Galbrun}
\begin{aligned}
\dens(-i\omega+\partial_{\bflow})^2\dislag 
&- \grad\left(\dens\sounds^2\div \dislag\right)
+ (\div \dislag) \grad \pres\\
%- \nabla^{\top}\dislag \cdot \grad\pres 
&-\grad(\grad\pres\cdot \dislag) + \hess(\pres)\dislag 
+ \damp \dens (-i \omega
% {\color{red}+\partial_{\bflow}}
) \dislag
= \boldf \quad \mbox{in }\domain
\end{aligned}
\end{align}
where $\dens, \pres, \sounds,\bflow$ and $\boldf$ denote density, pressure, sound speed, background velocity and sources, 
$\partial_\bflow := \sum_{l=1}^3 \bflow_l\partial_{x_l}$  
denotes the directional derivative in direction $\bflow$, 
$\hess(\pres)$ the Hessian of $\pres$, $\domain\subset\mathbb{R}^3$ a bounded Lipschitz domain, 
and damping is modeled by the term $- i \omega \damp \dens \dislag$ 
with damping coefficient $\damp$.  Eq.~\eqref{eq:Galbrun} has its roots in the time-dependent non-linear Euler equations. 
%(Instead of the symmetric formulation \eqref{eq:Galbrun} the terms 
%$-\nabla(\grad\pres\cdot \dislag) + \hess(\pres)\dislag$ are 
%often replaced by $(\nabla \dislag)^{\top}\grad\pres$.) 
It was observed by Galbrun \cite{Galbrun:31} that the Lagrangian linearization of these equations can be reformulated in an advantageous way, which reduces the number of unknowns. 
Galbrun's equation \eqref{eq:Galbrun} and its time-dependent analog are used in aeroacoustics to model (and eventually reduce) noise caused by moving objects such as air-conditioning devices, cars or aircraft engines (see, e.g.\ \cite{GLT:04}). 
There exist several related equations such as  Goldstein's or 
M\"ohring's equations, for which we refer to the discussions in \cite{BensalahJolyMercier:18,HaeggBerggren:19} and references therein. 	

In this paper we will only treat boundary conditions of the form  
\begin{align}\label{eq:bc_Galbrun}
\dislag\cdot \nv =g\qquad \mbox{on }\partial\domain\,.
\end{align}
\end{subequations}
The case of natural boundary conditions $\div\dislag=g$ on $\partial\domain$ poses no substantial differences in the analysis.
Here and in the following 
$\nv$ denotes the exterior unit normal vector on $\partial \domain$. 
To treat unbounded domains, the boundary condition \eqref{eq:bc_Galbrun} 
has to be complemented by a radiation condition at infinity, 
or replaced by  
a transparent boundary condition on artificial boundaries of a computational domain. The construction of such transparent boundary conditions is a topic of independent interest, which we will not touch here (see \cite{BecacheBonnetBDLegendre:06}).  

This work has mainly been motivated by the equations of stellar oscillations, a generalization of Galbrun's equation, which were first derived in \cite{LyndenBOstriker:67}. They are 
formulated in a frame rotating at constant angular velocity 
$\angvel\in\mathbb{R}^3$ with the star and therefore involve additional rotational terms. Moreover, the (scaled) Eulerian perturbation 
$\graveu$ of the gravitational background potential $\phi$ is needed as an additional scalar unknown to capture important types of waves in stars such as $g$- and $f$-modes. Stellar oscillations can be described by the following system 
of differential equations for the unknowns $\dislag$ and $\graveu$:  
\begin{subequations}\label{eqs:stellar_osc}
\begin{align}\label{eq:stellar_osc_dislag}
\begin{aligned}
\dens(-i\omega+\partial_{\bflow}+\angvel\times)^2\dislag 
- \grad(\dens\sounds^2\div\dislag)
+ (\div \dislag) \grad\pres
%- \nabla^{\top}\dislag \cdot \grad\pres 
-\grad(\grad\pres\cdot \dislag) 
\qquad&\\
 + \left(\hess(\pres)-\dens\hess(\phi)\right)\dislag 
+\damp \dens(- i \omega
% {\color{red}+\partial_{\bflow}}
) \dislag
+\dens \grad\graveu
= \boldf\quad& \mbox{in }\domain
\end{aligned}\\
\label{eq:stellar_osc_grav}
-\frac{1}{4\pi G}\Delta \graveu + \div(\dens\dislag) =  0\quad\mbox{in }\mathbb{R}^3&
\end{align}
Here $G$ denotes the gravitational constant, and $\dislag$ is set to zero on $\mathbb{R}^3\setminus \domain$. 
For a thorough discussion of the physical background of these equations and of their properties 
such as eigen-decomposition under radial symmetry we refer to the monographs \cite{HS:11,UOAS:89}. 
A sketch of the derivation of these equations is included in Sections \ref{sec:nonlinear}--\ref{sec:timeharmonic}. 
We have modeled the important effect of damping by the simple term $\damp \dens(- i \omega) \dislag$ here. 
The most important mechanisms contributing to wave attenuation are believed to be radiative damping and interaction 
with turbulent convection (see \cite{UOAS:89}). 
More refined models of these processes will have to be developed and analyzed in the future. 
Another simple model of wave attenuation, which is convenient for a modal analysis, is to replace $\omega$ by $\omega+i\gamma$ 
(see \cite{GB:02}). 
Note that in comparison to \eqref{eq:stellar_osc_dislag} this leads to additional terms after expanding the square. 

As for Galbrun's equation we only consider boundary conditions 
\begin{align}\label{eq:stellar_osc_bd_dislag}
\nv\cdot\dislag = g& \quad \mbox{on }\partial \domain
\end{align}
for $\dislag$ as well as a decay condition for $\graveu$:
\begin{align}\label{eq:stellar_osc_decay}
\lim_{|\boldx|\to \infty} \graveu(\boldx)=0.
\end{align}
\end{subequations}

The equations \eqref{eqs:stellar_osc} form the basis of helio- and asteroseismology (as opposed to seismology of the Earth, which 
is based on the linearized elasticity equations, at least 
in the solid mantle and crust). In asteroseismology and global helioseismology 
one aims to infer values of coefficients in a radially symmetric model from 
observations of eigen-frequencies. Whereas substantial simplifications apply in these applications due to the radial symmetry assumption, 
in local helioseismology, which aims at two- or three-dimensional imaging of physical quantities (in particular flows) in the solar interior, 
the full equations~\eqref{eqs:nl_pde} are often used for the definition of forward problems (see \cite{GB:02}). 
Solar oscillations (Sunquakes) excited by turbulent convection in the outer convection zone have been observed continuously and at high resolution over more than 20 years by satellite and ground-based Doppler shift measurements (see \cite{GBS:10} for more information). 
A simpler model based on a Helmholtz-type equation for 
the scalar quantity $\sounds\div\dislag$ as new unknown has been suggested in \cite{gizon_etal:17}. 
The scalar model in \cite{gizon_etal:17} cannot account for 
f- and g-modes, its validity is restricted to coefficients $\dens$, $\sounds$ and $\damp$ varying slowly compared to the wave-length. 
Moreover, it is not obvious how to relate $\sounds\div\dislag$ on the solar surface to the observed Doppler shift data. 
Therefore, it would be highly desirable to work with the 
full vectorial equation \eqref{eqs:stellar_osc} instead, and the 
aim of this paper is to contribute theoretical foundations for such an approach.

\medskip
It seems that surprisingly little is known about the fundamental question of well-posedness of the boundary value-problems 
\eqref{eqs:Galbrun} and \eqref{eqs:stellar_osc}, i.e.\ uniqueness, existence, and stability of solutions to these or related differential equations. 
We are only aware of two references which report results on the well-posedness of Galbrun's equation with non-uniform flow.
The first one \cite{BonnetBDMercierMillotPernetPeynaud:12} considers the time-harmonic case. Its approach is to regularize the equation for the displacement and to derive an additional (transport) equation for the vorticity. In two space dimensions, for certain kinds of flows $\bflow$ with small supremum norm of all first derivatives, 
the authors obtain Fredholmness of the system.
The second one \cite{HaeggBerggren:19} considers the time-dependent case. Its approach is to obtain well-posedness for the Eulerian linearization and subsequently to construct a solution for Galbrun's equation from the former. In contrast to these references, our approach is a purely ``elliptic'' one without any ``transport equation techniques''.

Mathematical analysis of the equations of stellar oscillations seems to have focused mainly on the time domain equations, studying long-time (secular) stability in the absence of damping and on completeness of normal modes including potential contributions from continuous parts of the  spectrum (see \cite{beyer:02,DS:79,hunter:77}). 

It has been known (see \cite[Appendix A]{hunter:77} and 
\cite[eq.~(3.16)]{DS:79}) that the sequilinear forms associated to the operators in \eqref{eqs:Galbrun} and \eqref{eqs:stellar_osc} without the first terms involving $i\omega$ are bounded from below by a (possibly negative) muliple of the squared $\boldL^2$-norm. 
If the associated energy space was compactly embedded in $\boldL^2$ and if $\bflow=0$, this would yield a G{\aa}rding-type inequality implying the desired Fredholmness of the differential operator.  
To deal with the lack of such a compact embedding, we derive 
a generalized Helmholtz decomposition which allows to establish 
weak $T$-coercivity for an operator $T$ flipping the sign of one of the
components of this Helmholtz decomposition. 
%We propose to model the damping term in the equations slightly different than in other works. 
By this means we obtain the well-posedness of the equations under quite weak assumptions: We
only require a $H(\div)$-type regularity of the background flow $\bflow$ and small enough $\boldL^\infty$-norm, see Theorem~\ref{thm:wTc-full}. For sufficiently smooth 
$\dens$, $\sounds$, $\domain$, 
 and homogeneous pressure $\pres$ and gravity $\grav$, our 
smallness assumption on $\bflow$ becomes 
\begin{align*}
\|\sounds^{-1}\bflow\|_{\boldL^\infty}<1,
\end{align*}
i.e.\ that the flow is everywhere subsonic. We demonstrate in one space dimension that the properties of the systems change considerably if this assumption is violated. 
For smooth, but not necessarily constant pressure and gravity, the constant on the right hand side may be 
smaller than $1$, but tends to $1$ as $\omega\to\pm\infty$. 
%In the case that $\pres$ and $\grav$ are only in $W^{2,\infty}$, the assumption reads
%\begin{align*}
%\|\sounds^{-1}\bflow\|_{\boldL^\infty}<\frac{1}{1+\tan^2\theta},
%\end{align*}
%with a constant $\theta=\theta(\omega,\gamma,\pres,\grav,\dens)$ tending to $0$ as $\omega\to\pm\infty$. 
We stress that our results require very little smoothness of the flow
% (as e.g.\ boundedness of all first derivatives)
and no assumption on the geometry of the flow (e.g.\ that ``$\bflow$ is filling''). These very weak assumptions on $\bflow$ may, e.g., be advantageous in the context of iterative methods for inverse problem with the flow as unknown.

The remainder of this article is structured as follows. In Section~\ref{sec:derivation} we derive the formerly presented sesquilinear form, define the corresponding Hilbert space and formulate our basic assumptions on the parameters. At the beginning of Section~\ref{sec:analysis} we set our notation. In Subsection~\ref{subsec:helmholtz} we prove the existence of a generalized Helmholtz decomposition, which is suited for our sesquilinear form. See Theorem~\ref{thm:decomposition}. In Subsection~\ref{subsec:cowling} we introduce the so-called Cowling approximation, which reduces the unknowns $(\dislag,\graveu)$ to $\dislag$. 
We explain that the Cowling approximation is a suitable simplification to study the original equation. In Subsection~\ref{subsec:bflow} we consider the special case of homogeneous pressure and gravity. 
In Subsection~\ref{subsec:pres_grav} we consider the special case of no flow. In Subsection~\ref{subsec:general} we merge the two kinds of analyses for the case of general parameters. 
Finally, in Subsection~\ref{subsec:fullequations} we consider the original equation and report our main results in Theorem~\ref{thm:wTc-full} 
before we end the paper with some conclusions in Section~\ref{sec:conclusion}. 
An appendix discusses variations of our main results for rough data under more stringent smallness assumptions.

\section{Derivation and weak formulation}\label{sec:derivation}
To help the reader finding a way through several versions of the basic equations discussed in the literature, we briefly 
sketch the derivation of Eq.~\eqref{eqs:stellar_osc} and then derive a weak formulation. 

\subsection{The non-linear equations}\label{sec:nonlinear}
As usual the time coordinate is denoted by $t\in\setR$ and the spatial coordinate by $\boldx\in\setR^3$.
Following Lynden-Bell \& Ostriker \cite{LyndenBOstriker:67} we consider a fluid described by
the velocity $\vel(t,\boldx)\in\setR^3$, the density $\dens(t,\boldx)\in\setR$, the pressure $\pres(t,\boldx)\in\setR$, the gravitational potential $\grav(t,\boldx)\in\setR$ in a coordinate system rotating 
at fixed angular velocity $\angvel\in\setR^3$.
The fluid is excited by an external force $\tilde\boldf(t,x)\in\setR^3$.
Then the conservation of momentum in this rotating frame is described by the generalized 
Euler equations 
\begin{subequations}\label{eqs:nl_pde}
\begin{align}\label{eq:motion}
(\partial_t+\partial_\vel)\vel +2\angvel\times\vel+\angvel\times(\angvel\times\boldx)
 =  \grad\grav -\frac{1}{\dens} \grad\pres + \frac{1}{\dens}\tilde\boldf \quad\text{in }\domain,
\end{align}
the conservation of mass by the continuity equation
\begin{align}\label{eq:cont}
\partial_t \dens +\div(\dens\vel)=0 \quad\text{in }\domain,
\end{align}
and the gravitational potential satisfies the equation 
\begin{align}\label{eq:gravity}
-\frac{1}{4\pi G}\Delta \grav = \dens \qquad \mbox{in }\setR^3
%\left\{ \begin{array}{ll}4\pi G\dens, &\boldx\in\domain,\\
%0, &\boldx\in\setR^3\setminus\domain,\end{array}\right.
\end{align}
whereby $\dens$ is set to zero in $\setR^3\setminus\domain$ and $G$ is the gravitational constant. Moreover, we impose the boundary condition
\begin{align}\label{eq:bc_stellar_osc}
\nv\cdot\vel=0 \quad\text{at }\partial\domain,
\end{align}
and the decay condition
\begin{align}\label{eq:gravity-rc}
\lim_{|\boldx|\to \infty} \grav(\boldx) = 0.
%\lim_{r\to+\infty}\|\grav\|_{L^\infty(\{\boldx\in\setR^3\colon |\boldx|_2>r\})}=0
\end{align}
% We will additionally consider damping caused by a local memory along the path of a fluid particle.
% The forces due to local memory effects are described by 
% \begin{align}
% \boldf(t,X(t,\boldy)) = -\frac{\dens(t,X(t,\boldy))}{\sqrt{2\pi}}\int_{-\infty}^t \damp(t-\tau) \vel(\tau,X(\tau,\boldy))
% \,d\tau+\tilde{\boldf}(t,X(t,\boldy))
% \end{align}
\end{subequations}
% with a memory kernel $\damp:[0,\infty) \to \mathbb{R}$. 
% For this to be well-defined we have to assume that the time propagation operator 
% $\domain\to \domain$, $X(t_1,\cdot)\mapsto X(t_2,\cdot)$ is bijective for all $t_1,t_2$.

\subsection{The linear time-harmonic equation}\label{sec:timeharmonic}
We consider a (sufficiently smooth) stationary equilibrium solution. That is $(\vel_0,\dens_0,\pres_0,\grav_0)$ solve Eqs.~\eqref{eqs:nl_pde} and 
%\eqref{eq:motion}, \eqref{eq:bc} \eqref{eq:cont}, \eqref{eq:gravity}, \eqref{eq:gravity-rc} with $\boldf=0$ and
$\partial_t\vel_0=0$, $\partial_t\dens_0=\partial_t \pres_0=\partial_t \grav_0=0$.
For non-stationary solutions $(\vel,\dens,\pres,\grav)$ to \eqref{eqs:nl_pde}, which are 
``close'' to $(\vel_0,\dens_0,\pres_0,\grav_0)$, we define the Eulerian perturbations by 
\begin{align}
\DEu \varphi(t,\boldx):= \varphi(t,\boldx)-\varphi_0(t,\boldx),\qquad \varphi \in \{\vel,\dens,\pres,\grav\}.
\end{align}
The path of a fluid particle, which is at position $\boldy\in\domain$ at time 
$t=0$ is described by the solution $X(\cdot,\boldy)$ to the initial value problem
\[
\partial_t X(t,\boldy) = \vel(t,X(t,\boldy)),\qquad X(0,\boldy)=\boldy.
\]
The Lagrangian perturbations are defined by
\begin{align}\label{eq:Lagr_pert}
\DLag \varphi(t,X_0(t,\boldy)):= \varphi(t,X(t,\boldy))-\varphi_0(t,X_0(t,\boldy)),\qquad 
%\phi \in \{\vel,\dens,\pres,\grav\}.
\end{align}
where $X_0$ is defined by the same initial value problem as $X$ with $\vel$ replaced by $\vel_0$. 
Our aim is to derive linear approximate equations for such perturbations. 
Let us abbreviate the Lagrangian perturbation of displacement $\phi_d(t,\boldx):=\boldx$ by 
$\dislag:=\DLag\phi_d$, i.e. 
\[
\dislag(t,X_0(t,\boldy)):= X(t,\boldy)- X_0(t,\boldy).
\]
(Often the symbol $\mathbf{\xi}$ is used instead of $\dislag$.) 
Then the definition of $\DLag\varphi$ may be rewritten as 
\[
\DLag \varphi(t,\boldx):= \varphi(t,\boldx + \dislag(t,\boldx))-\varphi_0(t,\boldx).
\]
It follows that the Eulerian and Lagrangian perturbations are related 
by
\begin{align}\label{eq:EulerLag}
\DLag\varphi \approx \DEu\varphi + \dislag\cdot \grad\varphi.
\end{align}
Here and in the remainder of this subsection the relation $\approx$ denotes equality up to first order terms in $\dislag$.
Subtracting \eqref{eq:motion} with $\vel=\vel_0$ from \eqref{eq:motion} with 
$\boldx$ replaced by $\boldx + \dislag(t,\boldx)$ and multiplying by $\dens_0$ yields 
\begin{align}\label{eq:motionLag}
\begin{aligned}
\dens_0 \Big[\DLag(\partial_t+\partial_\vel)\vel 
+2\angvel\times\DLag\vel+\angvel\times(\angvel\times\dislag)\Big]
&= \dens_0\DLag\left[\grad\grav -\frac{1}{\dens} \grad\pres 
+ \frac{1}{\dens}\tilde{\boldf}\right]\,.
% \dens_0 \Big[\DLag(\partial_t+\partial_\vel)\vel 
% +2\angvel\times\DLag\vel+\angvel\times(\angvel\times\dislag)
% &+ \frac{1}{\sqrt{2\pi}}\int_{-\infty}^t 
% \damp(t-\tau) \DLag\vel(\tau,\cdot)\,d\tau\Big]\\
% &= \dens_0\DLag\left[\grad\grav -\frac{1}{\dens} \grad\pres 
% + \frac{1}{\dens}\tilde{\boldf}\right]\,.
\end{aligned}
\end{align}
A straightforward computation (see \cite[eq.~(90)]{HaeggBerggren:19}) shows that 
\[
\DLag \vel = (\partial_t+ \partial_{\vel_0})\dislag\,.
\]
Moreover, as shown in \cite{LyndenBOstriker:67}, we have
\[
\DLag\left(\partial_t+\partial_{\vel}\right) \approx \left(\partial_t+\partial_{\vel_0}\right)\DLag
\]
up to first order terms in $\dislag$.
With the help of these two identities, the left hand side of \eqref{eq:motionLag} simplifies to 
\[
\dens_0\left(\partial_t+\partial_{\vel_0} + \angvel \times \right)^2\dislag.
% + \dens_0\int_{-\infty}^t 
% \damp(t-\tau) \left(\partial_t+\partial_{\vel_0}\right)\dislag(\tau,\cdot)\,d\tau
\]
The right hand side can be simplified as in \cite{LyndenBOstriker:67}. In particular, it 
follows from the continuity equation \eqref{eq:cont} that 
\begin{align}\label{eq:lin_cont}
&\DEu \dens + \div(\dens_0\dislag) \approx 0 
&&\DLag \dens + \dens_0\div\dislag \approx 0 
\end{align}
to first order 
(see \cite[eq.~(19)]{LyndenBOstriker:67}). Moreover, density and pressure can be related by an 
additional energy equation, which for adiabatic changes leads to a proportionality 	
%Let $\DLag$ be the Lagragian change operator as in \cite{LyndenBOstriker:67}
%and denote the relative changes in pressure and density as
\begin{align}\label{eq:rchange}
\frac{\DLag \pres}{\pres_0}= \rchange(\boldx)\frac{\DLag \dens}{\dens_0}\,.
\end{align}
%\marginpar{statt Gleichung in Goedbloed; (Equation~(24) of \cite{LyndenBOstriker:67}); bedingung oder definition?}
%\marginpar{es ist eine bedingung weil $\Gamma$ fuer alle zustandsaenderungen gleich sein soll(?).}
%\marginpar{aequivalenz von Druckgleichung in Goedbloed und Equation~(24) of \cite{LyndenBOstriker:67} einsehbar.}
Using \eqref{eq:lin_cont} and \eqref{eq:rchange}, 
 $\DLag\pres$ and $\DLag\dens$ can be eliminated from the expansion of 
$\DLag[\grad\pres/\dens]$, and one arrives at 
\begin{align*}
\dens_0\DLag\left[\frac{1}{\dens}\right.&\left.\grad\pres\right] 
\approx\\
&-\grad\left( \rchange\pres_0 \div \dislag\right)
+\grad \left(\pres_0 \div \dislag\right) -\grad(\grad\pres_0\cdot\dislag)+\hess(\pres_0)\dislag.
\end{align*}
Moreover, it follows from \eqref{eq:EulerLag}  that 
\begin{align*}
\dens_0\DLag\left[\grad\grav\right] 
\approx \dens_0 \hess(\grav_0) \dislag+\dens_0\grad\DEu\grav\,.
\end{align*}
Introducing the speed of sound by $\sounds:=\sqrt{\rchange\pres_0/\dens_0}$ 
and setting 
\[
\graveu := \DEu\grav
\]
%(with the scaling chosen such that the systems of differential equations becomes symmetric) 
and $\boldf :=\dens_0\DLag(\tilde{\boldf}/\dens)$ (note that $\DLag(\tilde{\boldf}/\dens)$ may contain dependencies of $\dislag$ which are not considered here), and taking a Fourier transform in time,  
eq.~\eqref{eq:motionLag} becomes 
\begin{subequations}\label{eqs:solar_osc_freq}
\begin{align}
\begin{aligned}
&\dens_0\left(-i\omega+\partial_{\vel_0} + \angvel \times \right)^2\widehat{\dislag}
% + \dens_0 \widehat{\damp}(\omega) (-i\omega {\color{red}+ \partial_{\vel_0}})\widehat{\dislag}
=\grad\left(\sounds^2\dens_0 \div \widehat{\dislag}\right)
-\grad\left(\pres_0 \div \widehat{\dislag}\right) \\
&\qquad\qquad\qquad +\grad(\grad \pres_0\cdot\widehat{\dislag})
-\hess(\pres_0)\widehat{\dislag} 
+\dens_0\hess(\grav_0)\widehat{\dislag} 
+\dens_0\nabla \widehat{\graveu}+\widehat{\boldf}
\end{aligned}
\end{align}
where $\widehat{\dislag}(\boldx)$ denotes the Fourier transform of $\dislag(\cdot,\boldx)$ evaluated 
at $\omega$, and analogously for $\widehat{\grav}$ and $\widehat{\boldf}$.
At this point we add the damping term $-i\omega\gamma \rho \widehat{\dislag}$ to the left-hand-side of \eqref{eqs:solar_osc_freq} which models attenuation by gravitational radiation and viscosity \cite{FS:75b, DS:79}. We note that this kind of damping 
differs from the damping model in \cite[eq.~(1)]{GizonEtal:17} where
damping is modelled by replacing $\omega$ by $\omega+i\gamma$.\\
% (the function $\damp$ is extended by zero to the negative half-axis). \\
Applying $\DEu$ to the continuity equation and using \eqref{eq:lin_cont} leads to 
$-( 4\pi G)^{-1}\Delta \DEu\grav=\DEu\dens 
\approx -\div(\dens_0\dislag)$. In the frequency 
domain this yields  
\begin{align}
\label{eq:gravlag}
-\frac{1}{4\pi G}\Delta \widehat{\graveu} +\div(\dens_0\widehat{\dislag}) &= 0\,.
\end{align}
%If the surface is approximately flat in the sense that $\partial_{\vel_0}\nv$ can be neglected, 
%is of neglectible size compared to $\dislag$, 
Moreover, it follows from \eqref{eq:EulerLag} and \eqref{eq:bc_stellar_osc} that 
\[
\nv\cdot\dislag = \nv \cdot \DLag \vel 
= \DEu\left[\nv\cdot \vel\right] + \nv\cdot\partial_{\vel_0}\vel_0
= \nv\cdot\partial_{\vel_0}\vel_0
%= \nv\cdot \DEu \vel + \partial_{\vel_0}(\nv\cdot\vel_0) 
%- \partial_{\vel_0}(\nv)\cdot \vel_0 \approx 0,
\]
which yields the boundary condition 
\begin{align}
\nv\cdot \widehat{\dislag} = \nv\cdot\partial_{\vel_0}\vel_0
\qquad \mbox{on }\partial \domain.
\end{align}
Finally, \eqref{eq:gravity-rc} leads to 
\begin{align}
\lim_{|\boldx|\to \infty} \widehat{\grav}(\boldx) = 0.
\end{align}
\end{subequations}
Since from this point on we do not consider the time-dependent equations %\eqref{eq:displ}, \eqref{eq:displ-bc}, \eqref{eq:gravlag}, \eqref{eq:gravlag-bc} 
anymore, we drop the hats of $\widehat\dislag$, $\widehat\graveu$, $\widehat\boldf$ and there will occur no confusion in the overloaded notation. 
We also rename the velocity $\vel_0$ as background flow $\bflow$
\begin{align*}
\bflow:=\vel_0.
\end{align*}
and we drop the index $0$ of the remaining quantities, i.e.\
\begin{align*}
\dens:=\dens_0, \qquad \pres:=\pres_0, \qquad \grav:=\grav_0.
\end{align*}
In this notation the equations \eqref{eqs:solar_osc_freq} coincide with 
\eqref{eqs:stellar_osc} with $g= \nv\cdot\partial_{\vel_0}\vel_0$.

\subsection{The variational formulation}
We introduce the scalar products
\begin{align*}
\spl \dislagscal,\dislagscal' \spr:=\int_\domain \dislagscal\ol{\dislagscal'} \,\dd\boldx, \qquad
\spl \dislag,\dislag' \spr:=\int_\domain \dislag\cdot\ol{\dislag'} \,\dd\boldx,
\end{align*}
for scalar functions $\dislagscal,\dislagscal'$ and vectorial functions $\dislag,\dislag'$. Here $\ol{\cdot}$ denotes complex conjugation. Since we use the same symbol for the scalar products of scalar and vectorial functions the notation is overloaded, but its meaning will always be clear from the arguments.
Henceforth we consider $\domain$ as the default domain for all function spaces and suppress the dependency in the notation, if the domain equals $\domain$. Thus we write e.g.\ $L^2=L^2(\domain)$ and so on.
If we do not explicitly indicate a particular field, all spaces are over $\setC$, e.g.\ $L^2=L^2(\domain)=L^2(\domain;\setC)$. Further, we set $\boldL^2:=(L^2)^3$.

Next, to prepare the derivation of a variational formulation of \eqref{eqs:stellar_osc} we consider the following auxiliary steps.
Basic algebraic computations yield
$
(\angvel \times \dislag) \cdot \dislag' =
- \dislag \cdot (\angvel \times \dislag').
$
By means of integration by parts we compute
%\begin{subequations}
\begin{align*}
\spl \dens \partial_\bflow \dislag,\dislag' \spr &= -\spl \dens \dislag, \partial_{\ol{\bflow}} \dislag' \spr
-\spl \div(\dens\bflow) \dislag, \dislag' \spr
+ \int_{\partial\domain} (\nv\cdot\bflow) (\dislag \cdot \ol{\dislag'}) \,\dd\boldx\\
&= -\spl \dens \dislag, \partial_{\ol{\bflow}} \dislag' \spr.
\end{align*}
%\end{subequations}
due to \eqref{eq:bc_stellar_osc} and \eqref{eq:cont}.
Since $\omega$, $\angvel$, $\bflow(\boldx)$ are real (vector) valued, the operators
$\omega$, $i\angvel\times$, $i\partial_\bflow$ are selfadjoint with respect to $\spl\dens\cdot,\cdot\spr$, i.e.
\begin{align*}%\label{eq:op-sa}
\spl \dens\omega\dislag,\dislag' \spr = \spl \dens\dislag,\omega\dislag' \spr,\quad %\hspace{1.5mm}
\spl i\angvel \times \dislag,\dislag' \spr = \spl \dislag,i\angvel \times \dislag' \spr,\quad %\hspace{1.5mm}
\spl \dens i\partial_\bflow \dislag,\dislag' \spr = \spl \dens \dislag, i\partial_\bflow \dislag' \spr.
\end{align*}
Thus
\begin{align*}
\spl \dens\big(\omega +i\partial_{\bflow}+i\angvel\times \big)^2 \dislag, \dislag'\spr = 
\spl \dens\big(\omega +i\partial_{\bflow}+i\angvel\times \big) \dislag,\big(\omega +i\partial_{\bflow}+i\angvel\times \big) \dislag'\spr,
\end{align*}
i.e.\ $\big(\omega +i\partial_{\bflow}+i\angvel\times \big)^2$ is selfadjoint with respect to $\spl\dens\cdot,\cdot\spr$.
Now, consider a solution $(\dislag,\graveu)$ to \eqref{eqs:stellar_osc}. If we test \eqref{eq:stellar_osc_dislag} and \eqref{eq:stellar_osc_grav}
with any $(\dislag',\graveu')$ such that $\dislag'$ satisfies $\nv\cdot\dislag'=0$ at $\partial\domain$, we obtain
\begin{align}\label{eq:variationaleq}
\ses\big((\dislag,\graveu),(\dislag',\graveu')\big) = \spl \boldf,\dislag' \spr
\end{align}
with the sesquilinear form
\begin{align}\label{eq:sesq-a}
\begin{split}
\ses\big((\dislag,\graveu),(\dislag',\graveu')\big)
&:=\spl \sounds^2\dens \div \dislag, \div \dislag' \spr
+\spl \div \dislag, \grad\pres \cdot \dislag' \spr
+\spl \grad\pres \cdot \dislag, \div \dislag' \spr\\
&+\spl (\hess(\pres)-\dens\hess(\grav)) \dislag,\dislag' \spr\\
&-\spl \dens (\omega+i\partial_\bflow+i\angvel\times) \dislag, (\omega+i\partial_\bflow+i\angvel\times) \dislag' \spr\\
&-i\omega \spl \damp\dens \dislag, \dislag' \spr\\
&- \spl \grad\graveu, \dens\dislag' \spr
- \spl \dens\dislag, \grad\graveu' \spr
+\frac{1}{4\pi G}\spl \nabla \graveu, \nabla \graveu' \spr.
\end{split}
\end{align}
Consequently, we define the Hilbert space
\begin{subequations}\label{eq:hilspX}
\begin{align}
\hs_{\bflow}&:=\{\dislag \in \boldL^2\colon \quad\div \dislag \in L^2, \quad \partial_\bflow \dislag \in \boldL^2,
\quad \nv\cdot\dislag=0\text{ at }\partial \domain\},
\end{align}
where the subscript $\bflow$ will usually be omitted, with inner product
\begin{align}
\spl \dislag,\dislag' \spr_\hs &:= \spl \div \dislag, \div \dislag' \spr
+\spl \partial_\bflow\dislag, \partial_\bflow\dislag' \spr
+\spl \dislag, \dislag' \spr
\end{align}
\end{subequations}
(see Lemma \ref{lem:HS}).
The appropriate space for the gravitational potential $\graveu$ is a bit more technical
since the $L^2(\setR^3)$-norm of $\graveu$ cannot be bounded by means of the sesquilinear form $\ses(\cdot,\cdot)$.
By the standard Helmholtz decomposition the set $\boldG:=\{\boldg\in \boldL^2(\setR^3)\colon \curl\boldg=0\}$ is a closed subspace of $\boldL^2(\setR^3)$ and hence a Hilbert space with the $L^2$-inner product. For each $\boldg\in\boldG$ exists a unique gradient potential $\graveu \in H^1_\mathrm{loc}(\setR^3)/\setC$ such that $\boldg=\nabla\graveu$. We define
\begin{align}\label{eq:hsgrav}
\hsgrav:=\{\graveu\colon \nabla\graveu\in\boldG\}, \qquad \spl\graveu,\graveu'\spr_{\hsgrav}:=\spl\nabla\graveu,\nabla\graveu'\spr_{\boldL^2(\setR^3)}
\end{align}
as appropriate  Hilbert space for the gravitational potential $\graveu$.
% \marginpar{This does not work for an unbounded domain. Either change back to $\domain$ or an arbitrarily large bounded $\domain'\supset \domain$ with Neumann bc or work with weighted norms.}
% \begin{subequations}\label{eq:hsgrav}
% \begin{align}
% \hsgrav&:=\{\graveu\in L^2\colon \grad\graveu \in \boldL^2 \text{ and } \spl \graveu,1 \spr=0\},\\
% \spl \graveu,\graveu' \spr_{\hsgrav}&:=\spl \grad\graveu,\grad\graveu' \spr.
% \end{align}
% \end{subequations}
Hence for reasonable parameters ($\domain, \omega, \angvel, \sounds, \bflow, \dens, \pres, \grav, \damp$) an appropriate space for \eqref{eq:sesq-a} will be $\hs\times \hsgrav$. We will detail our assumptions on the parameters in the following.

\subsection{Basic Assumptions}\label{subsec:assumptions}
% $\domain, \omega, \angvel, \sounds, \bflow, \dens, \pres, \grav, \damp$
Let $\domain$ be a bounded Lipschitz domain.
Let $\omega\in\setR$ and $\angvel\in\setR^3$.
Let $\sounds, \dens, \damp\colon\domain\to\setR$ be measurable and such that there exist positive constants $\soundsl, \soundsu,\densl, \densu, \dampl, \dampu$ with %and non-negative constants $\dampl, \dampu$ with
\begin{align}\label{eq:box_constraints}
\densl \leq \dens \leq \densu, \qquad \soundsl \leq \sounds \leq \soundsu, \qquad \dampl \leq \damp \leq \dampu
\end{align}
almost everywhere in $\domain$. Let $\pres, \grav \in W^{2,\infty}(D,\setR)$.
Let $\bflow\in \boldL^{\infty}(\domain,\setR^3)$ be such that
% there exists $\eta\in L^\infty(\domain,\setR)$ with $\eta^{-1}\in L^\infty(\domain,\setR)$ such that $\div(\eta\bflow)\in L^2_\mathrm{loc}(\domain)$.
$\div(\dens\bflow)\in L^2(\domain)$.
We require the latter assumption to well define the distributional derivate
% $\eta\partial_\bflow\in \boldL^2$
$\dens\partial_\bflow$
through
\begin{align}\label{eq:weak_partial_b}
\spl \dens\partial_\bflow \dislag,\dislag' \spr :=
-\spl \dens\dislag,\partial_\bflow\dislag' \spr
-\spl\div(\dens\bflow)\dislag,\dislag'\spr
% \spl \eta\partial_\bflow \dislag,\dislag' \spr :=
% -\spl \eta\dislag,\partial_\bflow\dislag' \spr
% -\spl\div(\eta\bflow)\dislag,\dislag'\spr
\end{align}
for $\dislag'\in \big(C^\infty_0(\domain)\big)^3$ and so $\partial_\bflow:=\dens^{-1}(\dens\partial_\bflow)$.
However, this assumption is not very restrictive as we can expect that the conservation of mass $\div(\dens\bflow)=0$ holds true.

\begin{lemma}\label{lem:HS}
If $\bflow$ satisfies the assumptions above, $\hs_{\bflow}$ is a well defined Hilbert space.
\end{lemma}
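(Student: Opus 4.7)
The plan is to verify that the inner product $\langle\cdot,\cdot\rangle_\hs$ is well-defined on $\hs_\bflow$, and then to check completeness in the usual way, with the only non-trivial steps being to show that the weak divergence, the weak directional derivative $\partial_\bflow$, and the normal trace all pass to the limit.

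First I note that under the standing assumptions $\bflow\in\boldL^\infty$ and $\div(\dens\bflow)\in L^2$, the right-hand side of \eqref{eq:weak_partial_b} defines a continuous antilinear functional on $\big(C^\infty_0(\domain)\big)^3$ for every $\dislag\in\boldL^2$, so $\dens\partial_\bflow\dislag$ exists as a vector-valued distribution, and the condition $\partial_\bflow\dislag\in\boldL^2$ (equivalent to $\dens\partial_\bflow\dislag\in\boldL^2$ since $\dens$ is bounded above and below) is meaningful. Similarly, $\dislag\in\boldL^2$ with $\div\dislag\in L^2$ means $\dislag\in\boldH(\div,\domain)$, and the normal trace $\nv\cdot\dislag\in H^{-1/2}(\partial\domain)$ is well-defined by the continuous trace operator on $\boldH(\div,\domain)$, so the boundary condition in the definition of $\hs_\bflow$ is also meaningful. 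Sesquilinearity, conjugate-symmetry and positivity of $\langle\cdot,\cdot\rangle_\hs$ are immediate, and positive-definiteness follows from $\|\dislag\|_{\boldL^2}^2\leq\|\dislag\|_\hs^2$.

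The main work is completeness. Let $(\dislag_n)$ be Cauchy in $\hs_\bflow$. Then $(\dislag_n)$, $(\div\dislag_n)$ and $(\partial_\bflow\dislag_n)$ are Cauchy in $\boldL^2$, $L^2$ and $\boldL^2$ respectively, with limits $\dislag$, $v$ and $\boldw$. Testing against $\varphi\in C^\infty_0(\domain)$ shows $v=\div\dislag$ distributionally, hence in $L^2$. For the convective derivative, I test \eqref{eq:weak_partial_b} applied to $\dislag_n$ against $\dislag'\in\big(C^\infty_0(\domain)\big)^3$. Because $\dislag'$ is smooth and compactly supported, $\partial_\bflow\dislag'\in\boldL^\infty\cap\boldL^2$ (using only $\bflow\in\boldL^\infty$), and $\div(\dens\bflow)\dislag'\in\boldL^2$ by hypothesis. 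Passing to the limit $n\to\infty$ on both sides gives
\begin{align*}
\langle\dens\boldw,\dislag'\rangle
=-\langle\dens\dislag,\partial_\bflow\dislag'\rangle
-\langle\div(\dens\bflow)\dislag,\dislag'\rangle,
\end{align*}
which is precisely \eqref{eq:weak_partial_b} with the role of $\dens\partial_\bflow\dislag$ played by $\dens\boldw$. Thus $\partial_\bflow\dislag=\boldw\in\boldL^2$.

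It remains to verify the boundary condition on $\dislag$. Since $\dislag_n\to\dislag$ in $\boldL^2$ and $\div\dislag_n\to\div\dislag$ in $L^2$, we have $\dislag_n\to\dislag$ in $\boldH(\div,\domain)$, and by continuity of the normal trace $\nv\cdot\dislag_n\to\nv\cdot\dislag$ in $H^{-1/2}(\partial\domain)$; since each $\nv\cdot\dislag_n=0$, so is the limit. Therefore $\dislag\in\hs_\bflow$ and $\dislag_n\to\dislag$ in the $\hs$-norm, establishing completeness. The only step requiring any genuine care is the passage to the limit in the weak definition of $\partial_\bflow$, which is where the hypothesis $\div(\dens\bflow)\in L^2$ is essential; the rest is bookkeeping.
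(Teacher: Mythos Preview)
Your proof is correct and follows essentially the same approach as the paper's: both arguments take a Cauchy sequence, obtain the $\boldL^2$-limit, and then pass to the limit in the weak definition \eqref{eq:weak_partial_b} of $\partial_\bflow$ to identify the directional derivative of the limit. The paper is more terse, delegating the $\div$- and boundary-trace part to the known completeness of $\hs_0=\boldH_0(\div,\domain)$, whereas you spell these out explicitly via the continuity of the normal trace on $\boldH(\div,\domain)$; the substance is the same.
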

\begin{proof}
We only sketch the proof of completeness as the other Hilbert space properties are obvious. For $\bflow=0$ the statement is well-known. For general $\bflow$ let $(\dislag_n)$ be a Cauchy sequence in $\hs_{\bflow}$. 
Then $(\dislag_n)$ is also a Cauchy sequence in $\hs_0$, hence it converges to some $\overline{\dislag}\in \hs_0$ in $\hs_0$. 
We have to show that $\dislag\in\hs_b$ and $\lim_{n\to\infty}\|\partial_{\bflow}(\dislag_n-\dislag)\|_{L^2}=0$. 
Replacing $\dislag$ by $\dislag_n$ in \eqref{eq:weak_partial_b} and letting $n\to\infty$ shows that \eqref{eq:weak_partial_b} 
holds true with $\dislag$ replaced by $\overline{\dislag}$ for all $\dislag'\in \big(C^\infty_0(\domain)\big)^3$.
Replacing $\dislag$ in \eqref{eq:weak_partial_b} by $\dislag_n-\dislag_m$, letting $m\to\infty$, taking a supremum over all 
$\dislag'\in \big(C^\infty_0(\domain)\big)^3$ with $\|\dislag'\|_{\boldL^2}=1$ 
and using the Cauchy sequence property of $(\dislag_n)$, it is easy to see that 
$\lim_{n\to\infty}\|\dislag_n-\overline{\dislag}\|_{\boldL^2}=0$. 
\end{proof}
It is straightforward to see that the sesquilinear form $a(\cdot,\cdot)$ is well-defined and bounded on $(\hs\times \hsgrav) \times (\hs\times \hsgrav)$.
\begin{remark}
We note that the standard model S of \cite{ChristensenDEtal:96} for the sun assumes in the atmosphere $\dens(\boldx)=Ce^{-\alpha|\boldx|}$ with positive constants $C,\alpha$. However, in this article we consider only bounded domains $\domain$ and do not touch the topic of radiation conditions. Hence $\densl$ may be very small, but still positive - which poses no conflict with our assumptions.
\end{remark}

\subsection{Non-tangential flows}\label{subsec:nontangential}
Our analysis will solely deal with the sesquilinear form $\ses(\cdot,\cdot)$ defined in \eqref{eq:sesq-a}. Hence if we consider other configurations which lead to the same sesquilinear form, then they will also be covered by our analysis. In particular if the background flow $\bflow$ is non-tangential at the boundary ($\nv\cdot\bflow\neq0$), then the boundary integral
\begin{align*}
\int_{\partial\domain} (\nv\cdot\bflow) ((\omega+i\partial_\bflow+i\angvel\times)\dislag \cdot \ol{\dislag'}) \,\dd\boldx
\end{align*}
appears. However, if we impose the additional natural boundary condition
\begin{align*}
(\omega+i\partial_\bflow+i\angvel\times)\dislag &=0 \quad\text{at }\{\boldx\in\partial\domain\colon \nv(\boldx)\cdot\bflow(\boldx)\neq0\},
\end{align*}
then we end up with the sesquilinear form $\ses(\cdot,\cdot)$ in \eqref{eq:sesq-a} again.

\section{Analysis}\label{sec:analysis}
This section is devoted to the analysis of sesquilinear form~\eqref{eq:sesq-a}. In preparation we introduce some common functional framework.
For some notions it is more convenient to work with operators instead of sesquilinear forms. Thus for generic Hilbert spaces $(\hsgeneric, \spl\cdot,\cdot\spr_{\hsgeneric})$, $(\hsgeneric_1, \spl\cdot,\cdot\spr_{\hsgeneric_1})$, $(\hsgeneric_2, \spl\cdot,\cdot\spr_{\hsgeneric_2})$ we introduce the space $\blo(\hsgeneric_1,\hsgeneric_2)$ of bounded linear operators from $\hsgeneric_1$ to $\hsgeneric_2$ and set $\blo(\hsgeneric):=\blo(\hsgeneric,\hsgeneric)$.
For $\tilde\op\in\blo(\hsgeneric_1,\hsgeneric_2)$ we call $\tilde\op^*\in\blo(\hsgeneric_2,\hsgeneric_1)$ its adjoint, which is defined through $\spl \ugeneric,\tilde\op^*\ugeneric' \spr_{\hsgeneric_1} = \spl \tilde\op\ugeneric,\ugeneric' \spr_{\hsgeneric_2}$ for all $\ugeneric\in\hsgeneric_1, \ugeneric'\in\hsgeneric_2$.
We denote sesquilinear forms with lower case letters and operators with upper case letters. For a bounded sesquilinear form $\tilde \ses(\cdot,\cdot)$ let $\tilde \op\in \blo(\hsgeneric)$ be its Riesz representation, which is characterized by the relation
\begin{align}\label{eq:defRieszRep}
\spl \tilde \op \ugeneric, \ugeneric' \spr_\hsgeneric = \tilde \ses(\ugeneric,\ugeneric') \quad\text{for all}\quad \ugeneric,\ugeneric'\in\hsgeneric.
\end{align}
Vice-versa for $\tilde\op\in\blo(\hsgeneric)$ let $\tilde\ses(\cdot,\cdot)$ be the bounded sesquilinear form defined by the left-hand-side of \eqref{eq:defRieszRep}. The tildes in the previous definition were merely used to prevent a confusion with the sesquilinear form $\ses(\cdot,\cdot)$ defined in \eqref{eq:sesq-a}.
The variational equation \eqref{eq:variationaleq} can now be reformulated as operator equation
\begin{align*}
A(\dislag,\graveu)=(\tilde\boldf,0)
\end{align*}
(with $\tilde\boldf\in\hs$).
%For properties which are defined for operators (e.g.\ Fredholm, injective, bijective) we call a sesquilinear form so, if the corresponding Riesz Representation~\eqref{eq:defRieszRep} is so. 

\begin{definition}
We say that $\tilde\op\in\blo(\hsgeneric)$ is coercive if $\inf_{\ugeneric\in\hsgeneric\setminus\{0\}} |\spl \tilde\op \ugeneric,\ugeneric \spr_\hsgeneric|/\|\ugeneric\|^2>0$. We say that $\tilde\op\in\blo(\hsgeneric)$ is weakly coercive, if there exists compact $K\in\blo(\hsgeneric)$ such that $\tilde\op+K$ is coercive.
We say that $\tilde\op\in\blo(\hsgeneric)$ is (weakly) $T$-coercive, if $T\in\blo(\hsgeneric)$ is bijective and $T^*\tilde\op$ is (weakly) coercive. The same coecivity properties are also attributed to the associated sesquilinear form $\tilde\ses$ defined by \eqref{eq:defRieszRep}.
\end{definition}
% \begin{definition}\label{def:uniformlysectorial}
% We call a matrix function $\matgr\in (L^\infty)^{3\times3}$ uniformly $s$-sectorial, if $s\in\setC$ and
% \begin{align}
% 0 < \inf_{\boldx\in\domain, \boldxi\in\setC^3, |\boldxi|=1}
% \Re \big( s\boldxi^H\matgr(\boldx)\boldxi \big) =: l_s(\matgr).
% \end{align}
% We call a matrix function $\matgr\in (L^\infty)^{3\times3}$ uniformly sectorial, if there exists $s\in\setC$ such that
% $\matgr$ is uniformly $s$-sectorial.
% \end{definition}
The following proposition follows easily from the Lax-Milgram lemma and Riesz theory:
\begin{proposition}
If $\tilde\op$ is weakly $T$-coercive, then $\tilde\op$ is a Fredholm operator with index zero. 
\end{proposition}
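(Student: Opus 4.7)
The plan is to first reduce the weakly coercive case to the coercive one using the compact perturbation $K$, invoke the Lax-Milgram/Riesz representation to pass to bijectivity, and finally peel off $T^*$ using that $T$ (and hence $T^*$) is bijective.

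More concretely, I would first observe that since $T^*\tilde\op$ is weakly coercive, there exists a compact operator $K\in\blo(\hsgeneric)$ such that $T^*\tilde\op+K$ is coercive. Applying the Lax-Milgram lemma to the associated sesquilinear form $\spl(T^*\tilde\op+K)\ugeneric,\ugeneric'\spr_{\hsgeneric}$ shows that $T^*\tilde\op+K$ is bijective with bounded inverse, hence in particular a Fredholm operator of index zero.

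Next I would use standard Riesz--Schauder / Fredholm theory: the class of Fredholm operators of index zero is stable under compact perturbations. Since $T^*\tilde\op=(T^*\tilde\op+K)-K$ with $K$ compact, it follows that $T^*\tilde\op$ is itself Fredholm of index zero.

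Finally, to extract $\tilde\op$ from $T^*\tilde\op$, I would invoke that $T\in\blo(\hsgeneric)$ is bijective by hypothesis, so by the open mapping theorem $T^{-1}\in\blo(\hsgeneric)$, and consequently $T^*$ is bijective with bounded inverse $(T^*)^{-1}=(T^{-1})^*$. In particular $T^*$ is Fredholm of index zero. Writing $\tilde\op=(T^*)^{-1}(T^*\tilde\op)$ and using that the composition of two Fredholm operators is Fredholm with index equal to the sum of the indices yields that $\tilde\op$ is Fredholm of index zero, as desired. I do not anticipate a real obstacle here; the only mildly subtle point is to recall that bijectivity of $T$ in $\blo(\hsgeneric)$ forces boundedness of $T^{-1}$ (open mapping) so that $(T^*)^{-1}$ genuinely lies in $\blo(\hsgeneric)$ and the factorization makes sense in the Fredholm calculus.
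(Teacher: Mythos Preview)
Your proof is correct and follows exactly the route the paper indicates: the paper does not spell out a proof but merely states that the proposition ``follows easily from the Lax-Milgram lemma and Riesz theory,'' which is precisely the combination you unpack (Lax--Milgram for bijectivity of the coercive part, Riesz--Schauder stability under compact perturbation, then removal of the bijective factor $T^*$).
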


Our goal is to derive sufficient (and if possible also necessary) conditions on the parameters $\domain, \omega, \angvel, \sounds, \bflow, \dens, \pres, \grav, \damp$ to ensure that $\op$ is Fredholm. We will do so by proving weak $T$-coercivity of $\op$ with an explicitly defined operator $T$. Weak $T$-coercivity of $\op$ with explicit knowledge of $T$ is actually stronger than Fredholmness because it serves as a guideline for the construction of reliable discrete approximations, see, e.g., \cite{Halla:19StekloffAppr,Halla:19Tcomp}.

In Subsection \ref{subsec:helmholtz} we will introduce a suitable generalized Helmholtz decomposition. Subsequently we will define $T$ based on this decomposition.
The sesquilinear form \eqref{eq:sesq-a} has a very rich structure and admits several different phenomena. To present these in a clear manner we will introduce in Subsection~\ref{subsec:cowling} the Cowling approximation \eqref{eq:sesq-a-cow} of \eqref{eq:sesq-a}, which reduces the unknowns $(\dislag,\graveu)$ to $\dislag$. This is a reasonable step, because if $\dens\in W^{1,\infty}$, then the original sesquilinear form \eqref{eq:sesq-a} is Fredholm if and only if the Cowling Approximation \eqref{eq:sesq-a-cow} is so. For details see Subsection~\ref{subsec:cowling}. As a next step we will investigate in Subsections~\ref{subsec:bflow} and \ref{subsec:pres_grav} two special cases of parameters, which require different analysis techniques. We will discuss how to merge these two approaches for general parameters in Subsection~\ref{subsec:general}. Finally in Subsection~\ref{subsec:fullequations} we will state our results for the original sesquilinear form \eqref{eq:sesq-a}. %$\ses(\cdot,\cdot)$ in 

At last, let us formulate the rather obvious injectivity of $\op$, which is caused by our modeling of the damping.
\begin{lemma}\label{lem:inj}
Let the assumptions of Subsection~\ref{subsec:assumptions} hold. Let $\hs$, $\hsgrav$ and $\ses(\cdot,\cdot)$ be as defined in \eqref{eq:hilspX}, \eqref{eq:hsgrav} and \eqref{eq:sesq-a}. If $\omega\neq0$, then the operator $\op$ induced by $\ses(\cdot,\cdot)$ is injective.
\end{lemma}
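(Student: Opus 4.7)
The plan is a standard energy argument: assume $A(\dislag,\graveu)=0$, test the homogeneous equation against $(\dislag,\graveu)$ itself, and extract the imaginary part to kill $\dislag$ via the damping term.

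Concretely, suppose $(\dislag,\graveu)\in\hs\times\hsgrav$ satisfies $\op(\dislag,\graveu)=0$. By definition of the Riesz representation, this means $\ses\big((\dislag,\graveu),(\dislag',\graveu')\big)=0$ for every $(\dislag',\graveu')\in\hs\times\hsgrav$. I would choose $(\dislag',\graveu')=(\dislag,\graveu)$ and then compute the imaginary part of $\ses\big((\dislag,\graveu),(\dislag,\graveu)\big)=0$.

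The main point is that every term in \eqref{eq:sesq-a} becomes real when tested diagonally, except the explicit damping term. Indeed: $\spl c_s^2\dens\div\dislag,\div\dislag\spr$ and $\spl(\hess(\pres)-\dens\hess(\grav))\dislag,\dislag\spr$ and $\frac{1}{4\pi G}\|\nabla\graveu\|^2$ are real because the coefficients are real and pointwise self-adjoint; the cross-terms $\spl\div\dislag,\grad\pres\cdot\dislag\spr+\spl\grad\pres\cdot\dislag,\div\dislag\spr$ and $-\spl\grad\graveu,\dens\dislag\spr-\spl\dens\dislag,\grad\graveu\spr$ are real as sums of a complex number with its conjugate; and the term $\spl\dens(\omega+i\partial_\bflow+i\angvel\times)\dislag,(\omega+i\partial_\bflow+i\angvel\times)\dislag\spr$ is real because, as verified in Subsection~\ref{subsec:assumptions} (using $\div(\dens\bflow)\in L^2$, $\nv\cdot\bflow=0$ on $\partial\domain$ together with the rewriting in \eqref{eq:weak_partial_b}), each of $\omega$, $i\partial_\bflow$ and $i\angvel\times$ is self-adjoint with respect to $\spl\dens\cdot,\cdot\spr$. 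The only term contributing to the imaginary part is the damping, giving
\[
0=\Im\,\ses\big((\dislag,\graveu),(\dislag,\graveu)\big)=-\omega\int_\domain\damp\dens|\dislag|^2\,\dd\boldx.
\]
Since $\omega\neq 0$ and $\damp,\dens\geq\min(\dampl,\densl)>0$ by \eqref{eq:box_constraints}, this forces $\dislag=0$ in $\hs$.

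Substituting $\dislag=0$ back into the equation reduces the sesquilinear form to $\frac{1}{4\pi G}\spl\nabla\graveu,\nabla\graveu'\spr_{\boldL^2(\setR^3)}=0$ for all $\graveu'\in\hsgrav$. Taking $\graveu'=\graveu$ yields $\|\nabla\graveu\|_{\boldL^2(\setR^3)}=0$, so $\graveu$ is constant on $\setR^3$; by the quotient construction in \eqref{eq:hsgrav} this is the zero element of $\hsgrav$. Hence $(\dislag,\graveu)=0$, proving injectivity. I do not anticipate a real obstacle: the only subtle input is the selfadjointness of $\omega+i\partial_\bflow+i\angvel\times$ with respect to $\spl\dens\cdot,\cdot\spr$, which is already established.
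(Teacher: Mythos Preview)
Your proof is correct and follows essentially the same approach as the paper: test the homogeneous equation against $(\dislag,\graveu)$, take the imaginary part to isolate the damping term and conclude $\dislag=0$, then use the remaining equation to get $\graveu=0$. One minor remark: the term $\spl\dens(\omega+i\partial_\bflow+i\angvel\times)\dislag,(\omega+i\partial_\bflow+i\angvel\times)\dislag\spr$ is real simply because it equals $\int_\domain\dens\,|(\omega+i\partial_\bflow+i\angvel\times)\dislag|^2\,\dd\boldx\ge 0$, so the appeal to selfadjointness is not actually needed there.
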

\begin{proof}
Let $(\dislag,\graveu)\in\ker A$. Then 
\begin{align*}
0=\Big|\Im \Big(\ses\big((\dislag,\graveu),(\dislag,\graveu)\big)\Big)\Big|
= |\omega| \spl \damp \dens \dislag,\dislag \spr
\geq |\omega|\dampl\densl \|\dislag\|^2_{\boldL^2}\end{align*}
and hence $\dislag=0$. We further compute 
\begin{align*}
0=\ses\big((\dislag,\graveu),(\dislag,\graveu)\big)
= \ses\big((0,\graveu),(0,\graveu)\big)
= \|\graveu\|^2_{\hsgrav}
\end{align*}
and conclude that $\graveu=0$. Thus $(\dislag,\graveu)=(0,0)$.
\end{proof}
Hence, if $\op$ is Fredholm and $\omega\neq0$, then it follows that $\op$ is bijective. 
%This is one reason which motivates our particular choice of the damping expression.

\subsection{Generalized Helmholtz decomposition}\label{subsec:helmholtz}
In this section we will derive a generalized Helmholtz decomposition 
of the space $\hs$ adapted to our problem. 

Recall that a vector space $V$ is called the direct algebraic sum of 
subspaces $V_1,\dots,V_N\subset V$, denoted by 
\begin{align}\label{eq:alg_sum}
\hsgeneric=\bigoplus_{n=1,\dots,N}\hsgeneric_n
\end{align}
if each element $y\in \hsgeneric$ has a 
unique representation of the form $y=\sum_{n=1}^Ny_n$ with $y_n\in 
\hsgeneric_n$. We refer to \eqref{eq:alg_sum} as algebraic decomposition of $\hsgeneric$. 
Note that there exist associated projection operators $P_{\hsgeneric_n}:\hsgeneric\to \hsgeneric_n$, $y\mapsto y_n$ 
 with 
$\ran P_{\hsgeneric_n}=\hsgeneric_n$ and $\ker P_{\hsgeneric_n} = \bigoplus_{m=1,\dots,N,m\neq n}\hsgeneric_m$.  
\begin{definition}\label{def:topdecomp}
An algebraic decomposition \eqref{eq:alg_sum} of a Hilbert space 
$\hsgeneric$ is called a \emph{topological decomposition}, denoted by $\bigoplus^{\calT}$ if all associated projection 
operators $P_{\hsgeneric_n}$ are continuous. 
%Let $(\hsgeneric_n\subset\hsgeneric)_{n=1,\dots,N}, N\in\setN$ be a %finite collection of subspaces of a Hilbert space $\hsgeneric$. We call $(\hsgeneric_n)_{n=1,\dots,N}$ a topological decomposition of $\hsgeneric$
%\begin{align}
%\hsgeneric = \bigoplus_{n=1,\dots,N}^\calT \hsgeneric_n,
%\end{align}
%if there exist bounded projections $(P_{\hsgeneric_n}\in\blo(\hsgeneric))_{n=1,\dots,N}$ such that $\ran P_{\hsgeneric_n} = \hsgeneric_n$, $\ker P_{\hsgeneric_n} = \bigoplus_{m=1,\dots,N; m\neq n} \hsgeneric_m$ and $I_\hsgeneric=\sum_{n=1}^N P_{\hsgeneric_n}$.
%In this case the norms generated by the scalar products $\spl\cdot,\cdot\spr_\hsgeneric$ and $\sum_{n=1}^N\spl P_{\hsgeneric_n}\cdot,P_{\hsgeneric_n}\cdot\spr_\hsgeneric$ are equivalent.
\end{definition}
Note that in a topological decomposition all subspaces 
$\hsgeneric_n = \bigcap_{m\neq n} \ker P_{\hsgeneric_m}$ are closed.
For  the following theorem let us introduce the short notation
\begin{align}\label{eq:qcoeff}
\qcoeff := \sounds^{-2}\dens^{-1}\grad\pres
\end{align}
and the embedding operator
\begin{align}
\opEmbed\dislag:=\dislag, \qquad \opEmbed\in\blo(\hs,\boldL^2).
\end{align}

\begin{theorem}\label{thm:decomposition}
%Suppose the assumptions of Subsection~\ref{subsec:assumptions} 
%are satisfied, 
Let $\bflow,\boldq\in L^{\infty}(\domain,\mathbb{R}^3)$,  
and let $\bflow$ satisfy the assumptions of Lemma \ref{lem:HS} such that $\hs$ is well-defined by \eqref{eq:hilspX}.
If $\bflow\neq0$ let $\domain$ be of class $C^{1,1}$ or convex.
%Let the subspaces $\hsV, \hsW, \hsZ\subset\hs$ be defined as in \eqref{eq:defPV}, \eqref{eq:defPW} and \eqref{eq:defhsZ}.
%Then these subspaces form a topological decomposition (see Definition~\ref{def:topdecomp}) of their mother space
Then $\hs$ admits a topological decomposition 
\begin{align}\label{eq:decomposition}
\hs = \hsV \oplus^\calT \hsW \oplus^\calT \hsZ
\end{align}
%with corresponding projections $\PV, \PW, \PZ$ as defined in \eqref{eq:defPV}, \eqref{eq:defPW} and \eqref{eq:defPZ} and they have the properties
with the following properties: 
\begin{enumerate}
\item\label{it:Vcompactembedding}
$\hsV\subset\{\nabla\compvp: \compvp\in H^2(\domain)\mbox{ with }\frac{\partial \compvp}{\partial \nv}=0 \mbox{ on }\partial \domain\}$ 
is compactly embedded in $\boldL^2$, i.e.\
$\opEmbed\PV$ is compact.
% $\opE_{\hsV,\boldL^2}\colon \hsV\hookrightarrow\boldL^2$ is compact.
\item\label{it:Wkernel}
$\hsW = \{\dislag \in \hs : \div \dislag + \qcoeff\cdot \dislag = 0\} $.
%For all $\compw\in\hsW$ we have
%\begin{align}\label{eq:Wkernel}
%\sounds\dens^{1/2}\div\compw+\sounds^{-1}\dens^{-1/2}\grad\pres\cdot%\compw=0\,.
%\end{align}
\item\label{it:Zfinite}
$\hsZ$ is finite-dimensional. 
%$\dim \hsZ < \infty$.
\end{enumerate}
Moreover, if the domain $\domain$ is of class $C^{1,1}$ or convex, then there exists $\Creg\in (0,1)$ such that
\begin{align}\label{eq:regest-2}
\Creg^2 \|\nabla^\top\compv\|_{(L^2)^{3x3}}^2 -(1-\Creg^2) \|\compv\|_{L^2}^2 \leq \|\div\compv\|_{L^2}^2
\end{align}
for all $\compv\in\hsV$. %with the $H^1$-seminorm $|\compv|_{\boldH^1}:=(\sum_{j,k=1}^3\|\partial_{x_j}\compv_k\|_{L^2}^2)^{1/2}$. 
If the domain $\domain$ is of class $C^{1,1}$ or convex and piecewise $C^{1,1}$, then for each $\eta\in W^{1,\infty}$ there exists a compact operator $K_\eta\in\blo(\hs)$ such that
\begin{align}\label{eq:multiplier}
\|\eta\div\compv\|_{L^2}^2=
\|\eta\nabla^\top\compv\|_{(L^2)^{3x3}}^2
%|\eta\compv|_{H^1}^2
+\spl K_\eta\compv,\compv \spr_{\hs}
\end{align}
for all $\compv\in\hsV$.
\end{theorem}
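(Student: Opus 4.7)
My approach centers on the bounded linear operator $\opdecomp \in \blo(\hs, L^2)$ defined by $\opdecomp\dislag := \div\dislag + \qcoeff\cdot\dislag$, whose kernel is exactly $\hsW$; this yields property (ii) and, since kernels of bounded maps are closed, the closedness of $\hsW$. The task reduces to decomposing a complement of $\hsW$ in $\hs$ into a potential-type subspace $\hsV$ (carrying the gradients, and compactly embedded in $\boldL^2$) and a finite-dimensional remainder $\hsZ$ that absorbs the Fredholm deficiency of the associated elliptic operator.

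To construct $\hsV$, I set $H^2_N := \{\varphi \in H^2(\domain): \partial_\nv\varphi = 0 \text{ on }\partial\domain\}$. When $\bflow \neq 0$, the hypothesis that $\domain$ is $C^{1,1}$ or convex ensures Neumann elliptic regularity, giving $\|\varphi\|_{H^2/\setC}^2 \le C(\|\Delta\varphi\|_{L^2}^2 + \|\nabla\varphi\|_{L^2}^2)$, and consequently the map $\nabla : H^2_N/\setC \to \hs$ is a bounded isomorphism onto its closed range. Under this identification, $\opdecomp$ corresponds to the elliptic operator $T\varphi := \Delta\varphi + \qcoeff\cdot\nabla\varphi \colon H^2_N/\setC \to L^2$, which differs from $\Delta$ by the compact perturbation $\qcoeff\cdot\nabla : H^2 \to H^1 \hookrightarrow L^2$ (Rellich) and is therefore Fredholm. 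Define $\hsV := \nabla(\hsV_1)$ where $\hsV_1 \subset H^2_N/\setC$ is any closed complement of $\ker T/\setC$; then $T|_{\hsV_1}: \hsV_1 \to \ran T$ is a bijection and, by the Open Mapping Theorem, a homeomorphism.

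Since $\ran T$ has finite codimension in $L^2$ (hence in $\ran\opdecomp$), the finite-dimensional quotient $\ran(\opdecomp)/\ran T$ is spanned by images $\opdecomp\dislag_1,\dots,\opdecomp\dislag_M$ of certain $\dislag_j \in \hs$, and I set $\hsZ := \spn\{\dislag_j\}$. For any $\dislag \in \hs$, the element $\opdecomp\dislag$ has a unique representation $T\varphi + \sum_j c_j \opdecomp\dislag_j$ with $\varphi \in \hsV_1$, $c_j \in \setC$, so that $\dislag - \nabla\varphi - \sum_j c_j\dislag_j \in \hsW$; this yields (iii) and the algebraic direct sum. The projections $\PV, \PZ, \PW$ are compositions of bounded maps (the continuous inverse $(T|_{\hsV_1})^{-1}$, bounded linear functionals on the finite-dimensional space $\spn\{\opdecomp\dislag_j\}$, and subtraction), so the decomposition is topological. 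Property (i) follows from $\hsV \subset \nabla H^2(\domain) \subset (H^1(\domain))^3$, compactly embedded in $\boldL^2$ by Rellich.

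For the regularity statements on $\hsV$, the substitution $\compv = \nabla\varphi$ reduces \eqref{eq:regest-2} to $\Creg^2 \|\hess\varphi\|_{L^2}^2 - (1-\Creg^2)\|\nabla\varphi\|_{L^2}^2 \le \|\Delta\varphi\|_{L^2}^2$, which follows by rearranging the elliptic regularity bound $\|\hess\varphi\|_{L^2}^2 \le C(\|\Delta\varphi\|_{L^2}^2 + \|\nabla\varphi\|_{L^2}^2)$ valid on $C^{1,1}$ or convex $\domain$. For \eqref{eq:multiplier} I would integrate $\int_\domain \eta^2(\Delta\varphi)^2$ by parts twice using $\partial_\nv\varphi = 0$: this produces $\int_\domain \eta^2|\hess\varphi|^2$ plus boundary terms involving the second fundamental form (manageable under the piecewise $C^{1,1}$ assumption) and interior remainders multiplying $\nabla\eta$; because these remainders contain at most one derivative of $\varphi$, they define a compact form $K_\eta$ on $\hs$ via Rellich. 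The main obstacles I foresee are (a) verifying the topological rather than merely algebraic character of the decomposition, since continuity of $\PV$ depends on the Open Mapping Theorem applied to the Fredholm map $T|_{\hsV_1}$, and (b) controlling the boundary and corner contributions in \eqref{eq:multiplier} so that every remainder is manifestly compact on $\hs$ in the $\hs$-topology rather than merely lower order, for which the piecewise $C^{1,1}$ regularity is essential.
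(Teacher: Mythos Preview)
Your overall strategy matches the paper's: take $\hsW = \ker(\div + \qcoeff\cdot)$, and use Fredholmness of the Neumann problem for $\Delta + \qcoeff\cdot\nabla$ to split off a gradient subspace $\hsV$ and a finite-dimensional remainder $\hsZ$. The implementation differs, however. The paper works variationally, defining $\opR\in\blo(\hs,H^1)$ via $\langle\opR\dislag,\compvp'\rangle_{H^1}=\langle\dislag,\nabla\compvp'\rangle-\langle\qcoeff\cdot\dislag,\compvp'\rangle$ and $\opdecomp\in\blo(H^1)$ via $\langle\opdecomp\compvp,\compvp'\rangle_{H^1}=\langle\nabla\compvp,\nabla\compvp'\rangle-\langle\qcoeff\cdot\nabla\compvp,\compvp'\rangle$, so that $\opdecomp$ is weakly coercive (hence Fredholm) on \emph{any} bounded Lipschitz domain; it then sets $\PV\dislag:=\nabla\compvp$ for the solution $\compvp\in(\ker\opdecomp)^\bot$ of $\opdecomp\compvp=\opR(I-\PZ)\dislag$. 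You instead work with the strong operator $T:H^2_N/\setC\to L^2$, whose Fredholmness hinges on $H^2$ Neumann regularity.

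This creates a genuine gap in the case $\bflow=0$ with $\domain$ merely Lipschitz, which the theorem explicitly allows: without $H^2$ regularity the map $\Delta:H^2_N/\setC\to L^2$ need not have closed range, so your $T$ is not Fredholm and the Open-Mapping step securing continuity of $\PV$ collapses. The paper avoids this by invoking $H^2$ regularity only \emph{after} solving the variational equation, and only to verify $\partial_\bflow\nabla\compvp\in\boldL^2$ when $\bflow\neq0$; for $\bflow=0$ membership $\nabla\compvp\in\hs_0$ is immediate, and compactness of $\opEmbed\PV$ is obtained from the Weber--Costabel embedding for curl-free $H(\div)$-fields with vanishing normal trace rather than from $H^2$. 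Your sketch for \eqref{eq:multiplier} via double integration by parts is essentially what the paper does (apply the Grisvard-type identity to $\eta\compv\in H^1$ and absorb the interior terms carrying $\nabla\eta$ and the boundary-curvature terms into $K_\eta$ using the compactness of $\opEmbed\PV$).
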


\begin{proof}
\emph{subspace $\hsW$:}
Let $\opR\in \blo(\hs,H^1)$ be defined by
\begin{align*}
\spl\opR\dislag,\compvp'\spr_{H^1} &:= \spl \dislag,\nabla\compvp' \spr
- \spl \qcoeff\cdot \dislag, \compvp' \spr,
\end{align*}
for all $\compvp'\in H^1, \dislag\in\hs$ and set 
$\hsW := \ker \opR$. Then by partial integration, property (\ref{it:Wkernel}) is satisfied. 

\emph{subspace $\hsZ$ and $P_{\hsZ}$:}
We further introduce $\opdecomp\in \blo(H^1)$ by 
\begin{align*}
\spl\opdecomp\compvp,\compvp'\spr_{H^1}
&:=\spl \nabla\compvp,\nabla\compvp' \spr
- \spl \qcoeff\cdot\nabla \compvp, \compvp' \spr
\end{align*}
for all $\compvp,\compvp'\in H^1$. Bounding the second term with the 
help of Young's inequality 
it is straightforward to see that $\opdecomp$ is weakly coercive and hence Fredholm. Thus $\ran\opdecomp$ is closed, 
and $\ker\opdecomp$ and $\ran\opdecomp^\bot$ are both finite-dimensional.
Note that formally $\opdecomp 
= \opR\circ \nabla$, and in particular 
$\ran \opdecomp \subset \ran \opR$. 
%We will choose $\hsZ$ as a subspace of minimal dimensional such that 
%\[
%\ran \opdecomp = \ran \opR|_{\hsZ^{\perp}}.
%\]
Let $\Pran$ be the $H^1$-orthogonal projection onto $\ran\opdecomp^\bot$. Thus $\ran\Pran\opR$ is finite dimensional, too. Let $\hsZ$ be a subspace of $\hs$ with
\begin{align}\label{eq:defhsZ}
\hsZ\subset\hs\colon \quad
\dim\hsZ=\dim\ran\Pran\opR \,\,\,\text{ and }\,\,\, \ran\Pran\opR|_{\hsZ}=\ran\Pran\opR.
\end{align}
It follows that $\Pran\opR|_{\hsZ}:\hsZ\to \ran\Pran\opR$ is a bijective linear mapping between finite dimensional spaces and hence 
boundedly invertible. Therefore, the equation 
\begin{align}\label{eq:eqforz}
\Pran\opR\compz=\Pran\opR\dislag
\end{align}
has a unique solution $\compz\in\hsZ$ for all $\dislag\in\hs$ depending continuously on $\dislag$, i.e.\ 
$\PZ:= (\Pran\opR|_{\hsZ})^{-1}\Pran\opR$ belongs to $\blo(\hs)$, and 
as $\dim \hsZ<\infty$ it is even compact. 
%We call this solution $\compz$ and set
%\begin{align}\label{eq:defPZ}
%\PZ\dislag:=\compz.
%\end{align}
%Since $\dim\ran\PZ\leq\dim\hsZ<\infty$ it follows that $\PZ$ is compact and hence . 
It follows directly from \eqref{eq:eqforz} that $\PZ\compz=\compz$ for $\compz\in\hsZ$, and hence $\PZ$ is a projection with $\ran\PZ=\hsZ$.

\emph{subspace $\hsV$ and $\PV$:}
For $\dislag\in\hs$ let $\compvp\in\ker B^\bot$ be the unique solution to
\begin{align}\label{eq:eqforvp}
\opdecomp\compvp=\opR(I_\hs-\PZ)\dislag
\end{align}
and set
\begin{align}\label{eq:defPV}
\PV\dislag:=\nabla\compvp, \qquad\hsV:=\ran\PV.
\end{align}
It follow that $\nabla\compvp\in\boldL^2$ and
\begin{subequations}
\begin{align*}
\Delta\compvp&=
\div((I_\hs-\PZ)\dislag)+\qcoeff\cdot ((I_\hs-\PZ)\dislag-\nabla\compvp)
\in L^2,\\
\nv\cdot\nabla\compvp&=0 \text{ at }\partial\domain.
\end{align*}
\end{subequations}
If $\bflow=0$, it already follows that $\nabla\compvp\in\hs$.
If $\bflow\neq0$, then we additionally demand $\domain$ to be either of class $C^{1,1}$ or convex (see \cite[Notation 2.1]{AmroucheBernardiDaugeGirault:98} for a definition of class $C^{1,1}$). This way standard regularity theory (see, e.g., \cite[Theorems~2.9, 2.17]{AmroucheBernardiDaugeGirault:98}) provides that $\compvp\in H^2$ and that there exists a constant $\Creg>0$ independent of $\compvp$ such that
\begin{align}\label{eq:regest-1}
\Creg^2 |\compvp|_{H^2}^2 -(1-\Creg^2)\|\nabla\compvp\|_{L^2}^2 \leq \|\Delta\compvp\|_{L^2}^2.
\end{align}
with the $H^2$-seminorm $|\compvp|_{H^2}:= (\sum_{j,k=1}^3\|\partial_{x_j}\partial_{x_k}v_0\|_{L^2}^2
)^{1/2}$.
So in this case $\partial_\bflow\nabla\compvp\in\boldL^2$ and $\nabla\compvp\in\hs$ follows as well. This shows that $\PV\in\blo(\hs)$.
Further, the embedding
$\opEmbed|_\hsV\colon \hsV\hookrightarrow\boldL^2$
% $\opE_{\hsV,\boldL^2}\colon \hsV\hookrightarrow\boldL^2$
is compact also for general Lipschitz domain $\domain$ (see, e.g., \cite{Weber:80} or \cite{Costabel:90}). Due to
\begin{align}\label{eq:relBR}
\opR \PV\dislag=\opR\nabla\compvp=\opdecomp\compvp
\end{align}
and \eqref{eq:eqforz} it follows that $\hsV\subset\ker\PZ$. Further, \eqref{eq:eqforvp}, $\hsV\subset\ker\PZ$ and \eqref{eq:relBR} yield that $\PV$ is indeed a projection. Since $\PZ$ is a projection, it also follows that $\hsZ\subset\ker\PV$ from \eqref{eq:eqforvp}.\\
To show \eqref{eq:multiplier} we apply \cite[Theorem~3.1.1.1]{Grisvard:85} or \cite[Lemma~2.11]{AmroucheBernardiDaugeGirault:98} to $\eta\compv\in H^1$ and use the compactness of Sobolev embeddings.

\emph{projection $\PW$:} 
At last we set
\begin{align}\label{eq:defPW}
\PW:=I_\hs-\PV-\PZ.
\end{align}
As we have already shown that $\PV$ and $\PZ$ are bounded projections 
with $\PV\PZ=\PZ\PV=0$, it follows that $\PW$ is a bounded projection 
with  $\hsV,\hsZ\subset\ker\PW$ and $\ran\PW\subset\ker\PV\cap\ker\PZ$. 
Therefore, it remains to show that $\ran\PW=\hsW$. First, suppose that 
$\dislag\in\ran\PW$, i.e.\ $\PW \dislag=\dislag$ and 
$\PZ\dislag = 0 =\PV\dislag$. Then $\nabla\compvp=0$ in \eqref{eq:defPV}, and hence $B\compvp=0$. Due to \eqref{eq:eqforvp} this implies 
$\opR\dislag=0$, i.e.\ $\dislag \in \hsW$.  Now suppose that 
$\dislag\in \hsW$, i.e.\ $\opR\dislag=0$. Then it follows from the 
definitions that $\PZ\dislag = 0$ and $\PV\dislag = 0$, and therefore 
$\dislag = \PW\dislag \in \ran \PW$.  This completes the proof 
that $\ran\PW=\hsW$.
\end{proof}

Note that in the case $\qcoeff=0$ we have $\hsZ=\{0\}$ since $\ran\opdecomp^\bot=\setC$ and $\ran\opR\bot\setC$.
Thus the decomposition \eqref{eq:decomposition} reduces 
to the well-known Helmholtz decomposition of $\dislag=\compv+\compw$ into a gradient function $\compv=\nabla\compvp$ and a divergence-free
function $\compw$, i.e.\ $\div\compw=0$. The third subspace $\hsZ$ is needed since for nonvanishing $\qcoeff$ the operators $\opdecomp$ 
and $\div+\qcoeff\cdot$ may not be surjective. Note that the role of $\hsZ$ is different from the role of the finite dimemsional space of harmonic fields (or differential forms) in a Hodge decomposition: e.g.\ we have $(\div+\qcoeff\cdot)\hsZ\neq \{0\}$. The decomposition 
$\dislag= \compv+\compw+\compz$ of a function $\dislag\in \hs_{\bflow}$ into its components in $\hsV$, $\hsW$ and $\hsZ$ coincides 
with the decomposition of $\dislag$ in $\hs_0$.
Of course the subspaces will change as $\hs_{\bflow} \subsetneq\hs_0$. 
A crucial property for the following analysis is that the spaces $\hsV$, $\hsW$, $\hsZ$ are indeed subspaces of $\hs_{\bflow}$ and that the projections onto the subspaces are continuous with respect to the norm of $\hs_{\bflow}$.

For $\dislag,\dislag'\in\hs$ we will often use the notation
\begin{align}\label{eq:notation_Helmholtz}
\begin{aligned}
&\compv:=\PV\dislag, &&\compw:=\PW\dislag, && \compz:=\PZ\dislag,\\
&\compv':=\PV\dislag',&&\compw':=\PW\dislag', &&\compz':=\PZ\dislag'
\end{aligned}
\end{align}
such that $\dislag = \compv+\compw+\compz$ and $\dislag'= \compv'+\compw'+\compz'$. 
At last we introduce the operator
\begin{align}\label{eq:defT}
T:=\PV-\PW+\PZ,
\end{align}
which switches the sign of $\compw$. 
We remark that this kind of ``sign-switch operator'' is commonly used in the analysis of the time-harmonic Maxwell equations, see, e.g., \cite{BuffaCostabelSchwab:02,Buffa:05,Halla:19StekloffAppr}.

\subsection{Cowling approximation}\label{subsec:cowling}
A common simplification, the so-called \emph{Cowling approximation}, of \eqref{eqs:stellar_osc} is to set $\graveu=0$ in \eqref{eq:stellar_osc_bd_dislag} and thus neglect the equations \eqref{eq:stellar_osc_grav}, \eqref{eq:stellar_osc_decay} for $\graveu$. The corresponding sesquilinear form is
\begin{align}\label{eq:sesq-a-cow}
\sescow(\dislag,\dislag'):=\ses\big((\dislag,0),(\dislag',0)\big).
\end{align}
There holds an injectivity result similar to Lemma~\ref{lem:inj}.
\begin{lemma}\label{lem:inj-cow}
Let the assumptions of Subsection~\ref{subsec:assumptions} hold. Let $\hs$ and $\sescow(\cdot,\cdot)$ be as defined in \eqref{eq:hilspX} and \eqref{eq:sesq-a-cow}. If $\omega\neq0$, then the operator 
$\opAcow$ induced by 
$\sescow(\cdot,\cdot)$ is injective.
\end{lemma}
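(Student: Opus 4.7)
The plan is to mimic the very short argument used for Lemma~\ref{lem:inj}, but even simpler since the gravitational unknown $\graveu$ is absent here. The key observation is that the damping term $-i\omega\spl\damp\dens\dislag,\dislag'\spr$ in \eqref{eq:sesq-a} is the only part of the sesquilinear form that contributes to the imaginary part when evaluated on the diagonal.

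First I would let $\dislag\in\ker\opAcow$, so that in particular
\begin{align*}
0=\sescow(\dislag,\dislag)=\ses\big((\dislag,0),(\dislag,0)\big).
\end{align*}
Then I would inspect each term of \eqref{eq:sesq-a} evaluated at $(\dislag,\dislag)$ (with $\graveu=\graveu'=0$) and check that each is real, except for the damping term. Concretely: the term $\spl\sounds^2\dens\div\dislag,\div\dislag\spr$ is real and non-negative, the two cross-terms $\spl\div\dislag,\grad\pres\cdot\dislag\spr+\spl\grad\pres\cdot\dislag,\div\dislag\spr$ sum to $2\Re\spl\div\dislag,\grad\pres\cdot\dislag\spr\in\setR$, the Hessian term $\spl(\hess(\pres)-\dens\hess(\grav))\dislag,\dislag\spr$ is real because $\hess(\pres)$ and $\hess(\grav)$ are real symmetric, and using the selfadjointness (established just before \eqref{eq:variationaleq}) the term $-\spl\dens(\omega+i\partial_\bflow+i\angvel\times)\dislag,(\omega+i\partial_\bflow+i\angvel\times)\dislag\spr$ equals $-\|(\omega+i\partial_\bflow+i\angvel\times)\dislag\|^2_{\dens}\in\setR$. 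Hence
\begin{align*}
0=\Im\sescow(\dislag,\dislag)=-\omega\spl\damp\dens\dislag,\dislag\spr.
\end{align*}

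Since $\omega\neq0$ and the box-constraints \eqref{eq:box_constraints} give $\damp\dens\geq\dampl\densl>0$ almost everywhere, this forces
\begin{align*}
0\geq\dampl\densl\|\dislag\|^2_{\boldL^2}=0,
\end{align*}
so $\dislag=0$.

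Realistically there is no ``hard part'' here: the whole argument is a direct transcription of the proof of Lemma~\ref{lem:inj} with the $\graveu$-block deleted, and the only thing that needs to be checked is the bookkeeping that confirms every non-damping contribution to $\sescow(\dislag,\dislag)$ is indeed real, which is precisely what the selfadjointness computation preceding \eqref{eq:variationaleq} delivers.
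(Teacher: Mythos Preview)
Your proof is correct and follows exactly the approach indicated by the paper, which simply says ``Proceed as in the proof of Lemma~\ref{lem:inj}.'' Your explicit bookkeeping that every non-damping term in $\sescow(\dislag,\dislag)$ is real is precisely the content of that cross-reference (and in fact the reality of $-\spl\dens D\dislag,D\dislag\spr$ with $D=\omega+i\partial_\bflow+i\angvel\times$ follows immediately from $\dens$ being real-valued, without needing the selfadjointness of $i\partial_\bflow$).
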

\begin{proof}
Proceed as in the proof of Lemma~\ref{lem:inj}.
\end{proof}
We remark that if $\dens\in W^{1,\infty}$, then
\begin{align}\label{eq:prodrule}
\div(\dens\dislag)=\dens\div\dislag+\nabla\dens\cdot\dislag\in L^2
\end{align}
and the ``off-diagonal'' terms in $\ses\big((\dislag,\graveu),(\dislag',\graveu')\big)$ can be expressed as
\begin{subequations}\label{eq:offdiagonal}
\begin{align}
-\spl \grad\graveu,\dens\dislag'\spr&=\spl \graveu,\div(\dens\dislag')\spr=\spl \graveu,\dens\div\dislag'+\nabla\dens\cdot\dislag'\spr
=\spl K\graveu,\dislag'\spr_{\hs},\\
-\spl \dens\dislag,\grad\graveu'\spr&=\spl \div(\dens\dislag),\graveu'\spr=\spl \dens\div\dislag+\nabla\dens\cdot\dislag,\graveu'\spr
=\spl \dislag,K\graveu'\spr_{\hs}
\end{align}
\end{subequations}
with an operator $K\in\blo(\hsgrav,\hs)$. Due to the compactness
of the Sobolev embedding $\hsgrav\hookrightarrow L^2$ (recall that $L^2=L^2(\domain)$ and $\domain$ is bounded), $K$ is compact. Further, the equation for $\graveu$ itself is coercive ($\ses\big((0,\graveu),(0,\graveu)\big)=\|\graveu\|^2_{\hsgrav}$). Thus the original sesquilinear form $\ses$ in \eqref{eq:sesq-a} is Fredholm, if and only if the Cowling Approximation \eqref{eq:sesq-a-cow} is so. Hence to analyze \eqref{eq:sesq-a} it suffices to study \eqref{eq:sesq-a-cow}.

In the case $\dens\notin W^{1,\infty}$ the expansion \eqref{eq:prodrule} is not admissible and the ``off-diagonal'' terms in $\ses\big((\dislag,\graveu),(\dislag',\graveu')\big)$ cannot be rearranged as in \eqref{eq:offdiagonal}.
Thus no compactness property arises. However, the equation for $\graveu$ itself is still coercive. Hence we can build the Schur complement of $\ses(\cdot,\cdot)$ with respect to $\graveu$ and the corresponding sesquilinear form reads
\begin{align*}
\sesschur(\dislag,\dislag')=\sescow(\dislag,\dislag')-\sesadd(\dislag,\dislag')
\end{align*}
with a non-negative zeroth order term $\sesadd(\dislag,\dislag')$. As we will see in Subsection~\ref{subsec:fullequations} the analysis of $\sescow(\cdot,\cdot)$ needs only to be slightly adapted to treat $\sesschur(\cdot,\cdot)$. However, first we have to perform the analysis of $\sescow(\cdot,\cdot)$, which we will do in the following subsections.

\subsection{Background flow}\label{subsec:bflow}
In this subsection we consider constant pressure and gravitational potential ($\pres=\textrm{const}$ and $\grav=\textrm{const}$).
Under these additional assumptions we have
\begin{align}\label{eq:sesq-bflowdamp}
\begin{split}
\sescow(\dislag,\dislag')
&=\spl \sounds^2\dens \div \dislag, \div \dislag' \spr
-\spl \dens (\omega+i\partial_\bflow+i\angvel\times) \dislag, (\omega+i\partial_\bflow+i\angvel\times) \dislag' \spr\\
&-i\omega \spl \damp\dens \dislag, \dislag' \spr.
\end{split}
\end{align}

\begin{theorem}\label{thm:wTc-flow}
Let the assumptions of Subsection~\ref{subsec:assumptions} hold.
Let $\hs$, $\sescow(\cdot,\cdot)$ and $T$ be as defined in \eqref{eq:hilspX}, \eqref{eq:sesq-a-cow} and \eqref{eq:defT}.
Let $\domain$ be of class $C^{1,1}$ or convex and piece-wise $C^{1,1}$.
Let $\pres$, $\grav$ be constant, $\sounds, \dens \in W^{1,\infty}$ and $\omega\neq0$.
If
\begin{align*}
\|\sounds^{-1}\bflow\|_{\boldL^\infty}^2<1,
\end{align*}
then $\opAcow$ is weakly $T$-coercive.
\end{theorem}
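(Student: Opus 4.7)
Since $\pres$ and $\grav$ are constant, the coefficient $\qcoeff = \sounds^{-2}\dens^{-1}\grad\pres$ vanishes, so by Theorem~\ref{thm:decomposition} the auxiliary subspace $\hsZ$ is trivial and the decomposition reduces to $\hs = \hsV \oplus^{\calT} \hsW$ with $\hsW = \{\dislag\in\hs : \div\dislag = 0\}$. In particular $T := \PV - \PW$ satisfies $T^2 = \Id$ and is therefore bijective. Weak $T$-coercivity of $\opAcow$ amounts to producing a compact $K\in\blo(\hs)$ such that $|\sescow(\dislag, T\dislag) + \spl K\dislag,\dislag\spr_\hs| \geq c\|\dislag\|_\hs^2$ for all $\dislag\in\hs$; by Theorem~\ref{thm:decomposition}(\ref{it:Vcompactembedding}) any bounded sesquilinear form that factors through the compact map $\opEmbed\PV:\hs\to\boldL^2$ may be absorbed into $K$ without further comment.

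Setting $\calD := \omega + i\partial_\bflow + i\angvel\times$ and writing $\dislag = \compv + \compw$, one has $\div\dislag = \div T\dislag = \div\compv$, and the conjugate symmetry $\spl\dens \calD\compv, \calD\compw\spr = \overline{\spl\dens \calD\compw, \calD\compv\spr}$ (together with the analogous identity for the damping term) gives, after collecting terms compact in $\dislag$ into an error:
\begin{align*}
\Re\,\sescow(\dislag,T\dislag) &= \|\sounds\sqrt\dens\div\compv\|^2 - \|\sqrt\dens \calD\compv\|^2 + \|\sqrt\dens \calD\compw\|^2 + (\text{compact}),\\
\Im\,\sescow(\dislag,T\dislag) &= 2\,\Im\spl\dens \calD\compv, \calD\compw\spr + \omega\|\sqrt{\damp\dens}\compw\|^2 + (\text{compact}).
\end{align*}

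The $\compv$-sector of the real part is handled using the subsonic condition. Apply the multiplier identity~\eqref{eq:multiplier} with $\eta = \sounds\sqrt\dens \in W^{1,\infty}$ to obtain $\|\sounds\sqrt\dens\div\compv\|^2 = \|\sounds\sqrt\dens\nabla^\top\compv\|^2$ up to compact. Decomposing $\calD\compv = i\partial_\bflow\compv + (\omega + i\angvel\times)\compv$ and noting that the derivative content of $\calD\compv$ lies entirely in $\partial_\bflow\compv$ while the remaining term is a bounded multiplication on $\compv$, one obtains $\|\sqrt\dens\calD\compv\|^2 = \|\sqrt\dens\partial_\bflow\compv\|^2$ up to compact. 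The pointwise bound $|\bflow\cdot\nabla\compv|^2 \leq \mu\,\sounds^2|\nabla\compv|^2$ with $\mu := \|\sounds^{-1}\bflow\|_{\boldL^\infty}^2 < 1$ then yields $\|\sqrt\dens\partial_\bflow\compv\|^2 \leq \mu\|\sounds\sqrt\dens\nabla^\top\compv\|^2$, whence
\begin{align*}
\|\sounds\sqrt\dens\div\compv\|^2 - \|\sqrt\dens \calD\compv\|^2 \geq (1-\mu)\|\sounds\sqrt\dens\nabla^\top\compv\|^2 + (\text{compact}).
\end{align*}
Combined with the equivalence $\|\compv\|_\hs^2 \lesssim \|\nabla^\top\compv\|^2 + \|\compv\|^2$, this produces $c_1\|\compv\|_\hs^2$ coercivity in the $\compv$-sector.

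The main obstacle concerns the $\compw$-sector, because $\hsW$ does \emph{not} embed compactly in $\boldL^2$, so the $\|\compw\|^2$ contribution to the $\hs$-norm must be controlled by a genuine positive term. A Young-type expansion gives $\|\sqrt\dens\calD\compw\|^2 \geq (1-\delta)\densl\|\partial_\bflow\compw\|^2 - C_\delta\|\compw\|^2$, where the $-C_\delta\|\compw\|^2$ loss is not compact; this deficit is to be compensated by the imaginary-part contribution $\omega\|\sqrt{\damp\dens}\compw\|^2 \geq \omega\dampl\densl\|\compw\|^2$. Invoking the elementary inequality $|z| \geq |\Re z + \lambda\Im z|/\sqrt{1+\lambda^2}$ with $\lambda = \sign(\omega)\lambda_0$ for sufficiently large $\lambda_0>0$, the coefficient of $\|\compw\|^2$ in $\Re+\lambda\Im$ becomes $\lambda\omega\dampl\densl - C_\delta > 0$. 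The remaining cross term $2\lambda\,\Im\spl\dens \calD\compv, \calD\compw\spr$ is estimated by a weighted Cauchy--Schwarz, with the weight split so that its $\compv$-share fits inside the $(1-\mu)$ margin of the $\|\nabla^\top\compv\|^2$-coercivity and its $\compw$-share fits inside the established $\|\partial_\bflow\compw\|^2$-positivity; the subsonic smallness $\mu < 1$ is essential here to keep both budgets strictly positive. Selecting $K$ to cancel the accumulated compact remainders then yields the desired lower bound and hence weak $T$-coercivity of $\opAcow$.
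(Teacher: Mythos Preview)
Your overall strategy coincides with the paper's: decompose $\dislag=\compv+\compw$, use the multiplier identity \eqref{eq:multiplier} with $\eta=\sounds\sqrt{\dens}$ to convert $\|\sounds\sqrt{\dens}\div\compv\|^2$ into $\|\sounds\sqrt{\dens}\nabla^\top\compv\|^2$ modulo compacts, exploit the subsonic bound for the $\compv$-sector, and extract $\|\compw\|^2$-control from the damping term in the imaginary part. Your real/imaginary part formulas are correct.

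There is, however, a genuine gap in your parameter selection. You first expand $\|\sqrt{\dens}\calD\compw\|^2\geq (1-\delta)\densl\|\partial_\bflow\compw\|^2-C_\delta\|\compw\|^2$ and then declare $\lambda_0$ ``sufficiently large'' so that $\lambda_0|\omega|\dampl\densl>C_\delta$. But the cross term $2\lambda\,\Im\spl\dens\calD\compv,\calD\compw\spr$ scales linearly in $\lambda$, and after a weighted Young inequality its $\compv$-share scales like $\lambda^2$. Since the $(1-\mu)$ margin in the $\|\nabla^\top\compv\|^2$-budget is \emph{fixed} (independent of $\lambda$), it cannot absorb a contribution of order $\lambda^2$ for large $\lambda$; likewise the $\|\partial_{\bflow}\compw\|^2$-budget $(1-\delta)\densl$ is fixed. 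So the two absorption claims in your last paragraph are mutually incompatible with the choice ``$\lambda_0$ large''.

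The paper avoids this by reversing the order of operations: it keeps $\|\sqrt{\dens}\calD\compw\|^2$ intact, estimates the cross term \emph{first} via $2ab\leq (1-\epsilon)^{-1}a^2+(1-\epsilon)b^2$ with $a=\tan\tau\,\|\sqrt{\dens}\partial_{\bflow}\compv\|$ and $b=\|\sqrt{\dens}\calD\compw\|$, so that a positive remainder $\epsilon\|\sqrt{\dens}\calD\compw\|^2$ survives, and only then chooses $\tau$ and $\epsilon$ \emph{small} so that $(1+(1-\epsilon)^{-1}\tan^2\tau)\mu<1$. In your notation this means $\lambda=\tan\tau$ must be taken \emph{small}, not large; the $\|\compw\|^2$-deficit arising from the subsequent expansion of $\epsilon\|\sqrt{\dens}\calD\compw\|^2$ is then only of size $\epsilon C_\delta$ and is easily dominated by $\tan\tau\,|\omega|\dampl\densl$ for a suitable joint choice of $\epsilon,\tau$. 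Your argument is repairable along exactly these lines.
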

\begin{proof}
Firstly we note that $T$ is self-inverse and hence bijective. Since $\pres$ is constant, we have $\qcoeff=0$ and $\hsZ=\{0\}$.
With the notation \eqref{eq:notation_Helmholtz}, note that 
$\langle T^*\opAcow \dislag,\dislag'\rangle_\hs = \sescow(\compv+\compw,\compv'-\compw')$.
We have to show that $T^*\opAcow=\op_1+\op_2$ can be split into the sum of a coercive operator $\op_1$ and a compact operator $\op_2$. 
To achieve this we insert into $\op_1$ an operator of the form 
\begin{align*}
\PV^*\left(\opEmbed^*\opEmbed+\frac{1}{4\delta}K^*K\right)\PV
% \PV^*(\opE_{\hsV,\boldL^2}^*\opE_{\hsV,\boldL^2}+\frac{1}{4\delta}K^*K)\PV
\end{align*}
with compact operator $K\colon\hsV\to\hs$ and the scalar $\delta>0$ to  be chosen later, 
% and
%\begin{align*}
%-\PV^*(\opE_{\hsV,\boldL^2}^*\opE_{\hsV,\boldL^2}+\frac{1}{4\delta}%K^*K)\PV
%\end{align*}
and insert the same operator with a minus sign into $\op_2$.  
More precisely, we define $\op_1, \op_2\in\blo(\hs)$ by
\begin{align*}
\spl \op_1\dislag,\dislag'\spr_\hs =\;& \spl \sounds^2\dens\div \compv,\div \compv'\spr
-\spl \dens i\partial_\bflow \compv,i\partial_\bflow \compv' \spr
+\spl \compv,\compv'\spr +\frac{1}{4\delta}\spl K\compv,K\compv' \spr_\hs\\
&+\spl \dens(\omega+i\partial_\bflow+i\angvel\times)\compw,(\omega+i\partial_\bflow+i\angvel\times)\compw' \spr
+i\omega \spl \damp\dens\compw,\compw'\spr\\
&-\spl \dens(\omega+i\partial_\bflow+i\angvel\times)\compw,i\partial_\bflow \compv' \spr
+\spl \dens i\partial_\bflow \compv,(\omega+i\partial_\bflow+i\angvel\times)\compw' \spr
\end{align*}
and
\begin{align*}
\spl \op_2\dislag,\dislag'\spr_\hs =\;&
-\spl \compv,\compv'\spr-\frac{1}{4\delta}\spl K\compv,K\compv' \spr_\hs\\
&-\spl \dens i\partial_\bflow \compv,(\omega+i\angvel\times)\compv' \spr
-\spl \dens (\omega+i\angvel\times)\compv,i\partial_\bflow \compv' \spr\\
&- \spl \dens (\omega+i\angvel\times)\compv,(\omega+i\angvel\times)\compv' \spr
-i\omega \spl \damp\dens\compv,\compv' \spr\\
&-\spl \dens(\omega+i\partial_\bflow+i\angvel\times)\compw,(\omega+i\angvel\times) \compv' \spr\\
&+\spl \dens(\omega+i\angvel\times) \compv,(\omega+i\partial_\bflow+i\angvel\times)\compw' \spr\\
&-i\omega \spl \damp\dens \compw,\compv' \spr
+i\omega \spl \damp\dens \compv,\compw' \spr
\end{align*}
for all $\dislag,\dislag'\in\hs$. Note that indeed $T^*\opAcow=\op_1+\op_2$ and that 
$\op_2$ is compact due to Theorem \ref{thm:decomposition}(\ref{it:Vcompactembedding}) and the compactness of $K$.  
To prove the coercivity of $\op_1$ we introduce a further parameter 
$\tau\in(0,\pi/2)$ and compute
\begin{align*}
\frac{1}{\cos\tau}
\Re\Big( e^{-i\tau\sign\omega} \spl \op_1 \dislag,\dislag \spr_\hs \Big) \hspace{-0.6mm}=\hspace{0.6mm}& \|\sounds\sqrt{\dens}\div \compv\|^2_{L^2}
-\| \sqrt{\dens}\partial_\bflow \compv \|^2_{\boldL^2}
+\|\compv\|_{\boldL^2}+\frac{1}{4\delta}\| K\compv\|_\hs^2\\
&+\| \sqrt{\dens}(\omega+i\partial_\bflow+i\angvel\times)\compw \|^2_{\boldL^2}
+|\omega|\tan\tau \| \sqrt{\damp\dens} \compw \|^2_{\boldL^2}\\
&-2\tan\tau\sign\omega\,\Im\big(\spl \dens i\partial_\bflow \compv,(\omega+i\partial_\bflow+i\angvel)\compw \spr_{\boldL^2}\big).
\end{align*}
Estimating the last term by the Cauchy-Schwarz inequality and the weighted Young inequality $2ab\leq (1-\epsilon)^{-1}a^2+(1-\epsilon)b^2$ 
with another parameter $\epsilon>0$, $a=\tan\tau \|\sqrt{\dens}\partial_\bflow \compv\|_{\boldL^2}$, and 
$b= \|\sqrt{\dens}(\omega+i\partial_\bflow+i\angvel\times)\compw\|_{\boldL^2}$ we obtain 
\begin{align*}
\frac{1}{\cos\tau}
\Re\Big( e^{-i\tau\sign\omega} \spl \op_1 \dislag,\dislag \spr_\hs \Big)
\hspace{-1.6mm}\geq\hspace{0.1mm}&\| \sounds\sqrt{\dens}\div \compv\|^2_{L^2}
-\left(1+(1-\epsilon)^{-1}\tan^2\tau\right)\| \sqrt{\dens}\partial_\bflow \compv \|^2_{\boldL^2}\\
&+\|\compv\|_{\boldL^2}+\frac{1}{4\delta}\|K\compv\|_\hs^2\\
&+\epsilon\| \sqrt{\dens}(\omega+i\partial_\bflow+i\angvel\times)\compw \|^2_{\boldL^2}
+|\omega|\tan\tau \| \sqrt{\damp\dens}\compw \|^2_{\boldL^2}.
\end{align*}
Now we choose the operator $K$ as $K:=K_\eta$ with $\eta:=\sounds\sqrt{\dens}$ and $K_\eta$ as in Theorem~\ref{thm:decomposition}. 
Due to this theorem the operator $K$ is indeed compact and we have 
\begin{align*}
\|\sounds\sqrt{\dens}\div\compv\|_{L^2}^2 = \|\sounds\sqrt{\dens}\nabla^\top\compv\|_{(L^2)^{3x3}}^2 +\spl K\compv,\compv\spr_\hs
\end{align*}
for each $\compw\in\hsV$.
We choose now $\epsilon$ and $\tau$ small enough such that
\begin{align*}
0<1-\left(1+(1-\epsilon)^{-1}\tan^2\tau\right)\|\sounds^{-1}\bflow\|_{\boldL^\infty}^2=:C_{\epsilon,\tau}.
\end{align*}
Bounding $\|\sqrt{\dens}\partial_\bflow\compv\|^2_{\boldL^2}$ by $\|\sounds^{-1}\bflow\|_{\boldL^\infty}^2\|\sounds\sqrt{\dens}\nabla^\top\compv\|_{(L^2)^{3\times3}}^2$ we can estimate
\begin{align*}
\|\sounds\sqrt{\dens}\div\compv\|^2_{L^2}&-(1+(1-\epsilon)^{-1}\tan^2\tau)\|\sqrt{\dens}\partial_\bflow\compv\|^2_{\boldL^2}\\
&\geq \soundsl^2\densl C_{\epsilon,\tau} |\compv|_{\boldH^1}^2 -|\spl K\compv,\compv\spr_\hs|\\
&\geq \soundsl^2\densl C_{\epsilon,\tau} |\compv|_{\boldH^1}^2 -\frac{1}{4\delta}\|K\compv\|_\hs^2 
-\delta\|\compv\|_\hs^2.
\end{align*}
We continue to estimate
\begin{align*}
\| \sounds\sqrt{\dens}\div \compv\|^2_{L^2}
&-\left(1+(1-\epsilon)^{-1}\tan^2\tau\right)\| \sqrt{\dens}\partial_\bflow \compv \|^2_{\boldL^2}
+\|\compv\|_{\boldL^2}+\frac{1}{4\delta}\|K\compv\|_\hs^2\\
&\geq \soundsl^2\densl C_{\epsilon,\tau} |\compv|_{\boldH^1}^2 +\|\compv\|_{\boldL^2}^2 -\delta\|\compv\|_\hs^2
\end{align*}
There exists a constant $C_V>0$ such that
\begin{align*}
\soundsl^2\densl C_{\epsilon,\tau} |\compv|_{\boldH^1}^2 +\|\compv\|_{\boldL^2}^2 \geq C_V\|\compv\|_\hs^2
\end{align*}
for each $\compv\in\hsV$. Thus
\begin{align*}
\| \sounds\sqrt{\dens}\div \compv\|^2_{L^2}
&-\left(1+(1-\epsilon)^{-1}\tan^2\tau\right)\| \sqrt{\dens}\partial_\bflow \compv \|^2_{\boldL^2}
+\|\compv\|_{\boldL^2}+\frac{1}{4\delta}\|K\compv\|_\hs^2\\
&\geq (C_V-\delta)\|\compv\|_\hs^2.
\end{align*}
Now we choose $\delta<C_V$.
The second part can be estimated using a weighted Young inequality and $\underline{\damp}>0$ to obtain
\begin{align*}
\epsilon\|\sqrt{\dens}(\omega+i\partial_\bflow+i\angvel\times)\compw\|^2_{\boldL^2}
&+|\omega|\tan\tau\|\sqrt{\damp\dens}\compw\|^2_{\boldL^2}\\
&\geq C_W(\|\partial_\bflow\compw\|^2_{\boldL^2} + \|\compw\|^2_{\boldL^2})
= C_W\|\compw\|_\hs^2
\end{align*}
for some $C_W>0$. 
Due to the equivalence of norms (see Definition~\ref{def:topdecomp}) the former estimates already yield the coercivity of $\op_1$.
\end{proof}
We remark that as long as $\dampl$ is positive, it can be arbitrarily small to satisfy the assumptions of the previous theorem. 

\begin{remark}[super-sonic flows]
The main assumption of Theorem~\ref{thm:wTc-flow} is
\begin{align*}
\|\sounds^{-1}\bflow\|_{\boldL^\infty}<1.
\end{align*}
It is a legimate question if this assumption can be further weakend. The answer is no --- at least for our kind of analysis. 
The situation can be examplified for the one-dimensional setting: Let $\domain=(-1,1)$ and $\angvel=0$. Then the sesquilinear form reads
\begin{align*}
\spl \dens\sounds^2(1-\sounds^{-2} b^2)\partial_x u,\partial_x u'\spr + \mathrm{low~order~terms}.
\end{align*}
Observe that if $|b|=\sounds$ on an open subset of $\domain$, then the principal part of the differential operator has 
an infinite-dimensional kernel, and therefore the differential operator cannot be Fredholm. Also if 
$|b|>\sounds$ on an open subset, the nature of the equation changes drastically. In particular, we loose uniform ellipticity if 
$b$ is continuous. 
Hence in this sense, the assumption is optimal.
\end{remark}

\subsection{Pressure and gravity}\label{subsec:pres_grav}
In this subsection we consider the opposite case to Subsection~\ref{subsec:bflow}. That is we consider the
case of no flow ($\bflow=0$), but non-homogeneous pressure $\pres$ and gravity $\grav$.
The core ingredient of the analysis in this case is to rewrite
\begin{align*}
\begin{aligned}
\spl \sounds^2\dens \div \dislag&, \div \dislag' \spr
+\spl \div \dislag, \nabla \pres \cdot \dislag' \spr
+\spl \nabla \pres \cdot \dislag, \div \dislag' \spr\\
=\;&\spl \sounds\sqrt{\dens} \div \dislag + \sounds^{-1}\dens^{-1/2} \nabla \pres \cdot \dislag,
\sounds\sqrt{\dens} \div \dislag' + \sounds^{-1}\dens^{-1/2} \nabla \pres \cdot \dislag' \spr\\
&-\spl \sounds^{-2}\dens^{-1}\nabla \pres \cdot \dislag, \nabla \pres \cdot \dislag' \spr,
% \\
% =\;& \spl \sounds^2\dens (\div\dislag+\qcoeff\cdot\dislag),\div\dislag'+\qcoeff\cdot\dislag' \spr
% -\spl \sounds^2\dens\;\qcoeff\cdot\dislag, \qcoeff\cdot\dislag' \spr,
\end{aligned}
\end{align*}
which is inspired by \cite[Chapter~6.5.4]{GoedbloedKeppensPoedts:19}.
Let
\begin{align}\label{eq:defmatltwo}
\begin{aligned}
\matgr(\boldx):=\;&
\dens(\omega I_{3\times3}+i\angvel\times)^H(\omega I_{3\times3}+i\angvel\times)\\
&+i\omega\damp\dens I_{3\times3}\\
&-\hess(\pres)+\dens\hess(\grav)
+\sounds^{-2}\dens^{-1} \nabla\pres\nabla\pres^\top.
% +\sounds^2\dens\;\qcoeff\qcoeff^\top.
\end{aligned}
\end{align}
This way we can express
\begin{align}\label{eq:ses-repr}
\sescow(\dislag,\dislag')=\spl \sounds^2\dens (\div\dislag+\sounds^{-1}\dens^{-1/2} \nabla \pres\cdot\dislag),\div\dislag'+\sounds^{-1}\dens^{-1/2} \nabla \pres\cdot\dislag' \spr
-\spl \matgr \dislag,\dislag' \spr.
\end{align}
The part $-\spl \matgr \dislag,\dislag' \spr$ contains only zero order terms, while the part $\spl \sounds^2\dens (\div\dislag+\sounds^{-1}\dens^{-1/2} \nabla \pres\cdot\dislag),\div\dislag'+\sounds^{-1}\dens^{-1/2} \nabla \pres\cdot\dislag' \spr$ is positive semi-definite. In this representation the sesqulinearform looks similarly to the sesquilinear form to an abstract time-harmonic wave equation with exterior derivative $\dd=\div+\sounds^{-1}\dens^{-1/2} \nabla \pres\cdot$. Further, from representation \eqref{eq:ses-repr} it follows that $\sescow(\cdot,\cdot)$ is already coercive. However, we will not exploit this observation. Instead we introduce the next theorem, which serves as preparation for the case of general parameters in Subsection~\ref{subsec:general}.
Recall that the numerical range of a matrix $M\in\setC^{3\times3}$ is defined by
\begin{align*}
\numran M := \{ \xi^H M \xi\colon \xi\in\setC^3, |\xi|_2=1\}.
\end{align*}

\begin{theorem}\label{thm:wTc-presgrav}
Let the assumptions of Subsection~\ref{subsec:assumptions} hold. Let $\hs$, $\sescow(\cdot,\cdot)$ and $T$ be as defined in \eqref{eq:hilspX}, \eqref{eq:sesq-a-cow} and \eqref{eq:defT}. Let $\bflow=0$ and $\omega\neq0$.
Then $\opAcow$ is weakly $T$-coercive.
\end{theorem}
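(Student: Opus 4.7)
My plan follows the blueprint of Theorem~\ref{thm:wTc-flow}: decompose $T^*\opAcow = \op_1 + \op_2$ with $\op_2$ compact and $\op_1$ coercive after a phase rotation by $e^{-i\tau\sigma}$ (with $\sigma = \sign\omega$ and $\tau \in (0,\pi/2)$). The starting point is the representation \eqref{eq:ses-repr}, which writes $\sescow(\dislag,\dislag') = \langle \sounds^2\dens D\dislag, D\dislag'\rangle - \langle\matgr\dislag,\dislag'\rangle$ with $D := \div + \qcoeff\cdot$. Substituting $T\dislag' = \compv' - \compw' + \compz'$ and using $D\compw = D\compw' = 0$ from Theorem~\ref{thm:decomposition}(\ref{it:Wkernel}), the first summand reduces to $\langle\sounds^2\dens D(\compv+\compz), D(\compv'+\compz')\rangle$, in which $\compw$ and $\compw'$ do not appear; hence the entire $(\compw,\compw')$-block of $T^*\opAcow$ is carried by the second summand and equals $+\langle\matgr\compw,\compw'\rangle$, with the sign flipped by $T$.

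I would declare $\op_1$ to be the block-diagonal part
\begin{align*}
\langle \op_1\dislag,\dislag'\rangle_\hs := \langle\sounds^2\dens D\compv, D\compv'\rangle + C_V\langle\compv,\compv'\rangle + \langle\matgr\compw,\compw'\rangle + C_Z\langle\PZ\dislag,\PZ\dislag'\rangle_\hs
\end{align*}
for constants $C_V,C_Z > 0$ to be fixed below, and $\op_2 := T^*\opAcow - \op_1$. The operator $\op_2$ collects the six cross blocks between $\hsV,\hsW,\hsZ$, the residual $\hsV$-diagonal contribution $-\langle\matgr\compv,\compv'\rangle$, the full $\hsZ$-diagonal $\langle\sounds^2\dens D\compz, D\compz'\rangle - \langle\matgr\compz,\compz'\rangle$, and the compensating corrections $-C_V\langle\compv,\compv'\rangle$ and $-C_Z\langle\PZ\dislag,\PZ\dislag'\rangle_\hs$. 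Each of these contributions is compact on $\hs$: the ones containing $\compv$ or $\compv'$ in a zeroth-order slot factor through the compact embedding $\opEmbed\PV\colon\hs\to\boldL^2$ of Theorem~\ref{thm:decomposition}(\ref{it:Vcompactembedding}), and the ones containing $\compz$ or $\compz'$ have finite-dimensional range by Theorem~\ref{thm:decomposition}(\ref{it:Zfinite}). In particular the $D$-based cross terms $\langle\sounds^2\dens D\compv, D\compz'\rangle$ and $\langle\sounds^2\dens D\compz, D\compv'\rangle$ are compact because $D$ maps the finite-dimensional $\hsZ$ into a finite-dimensional subspace of $L^2$.

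For the coercivity of $\op_1$, I would compute $(\cos\tau)^{-1}\Re(e^{-i\tau\sigma}\langle\op_1\dislag,\dislag\rangle_\hs)$ block by block. The $\hsV$-block is purely real; Young's inequality applied to $|D\compv|^2 \geq \tfrac12|\div\compv|^2 - |\qcoeff\cdot\compv|^2$ yields $\|\sounds\sqrt{\dens}D\compv\|_{L^2}^2 + C_V\|\compv\|_{\boldL^2}^2 \geq \tfrac{\soundsl^2\densl}{2}\|\div\compv\|_{L^2}^2 + (C_V - \soundsl^2\densl\|\qcoeff\|_{\boldL^\infty}^2)\|\compv\|_{\boldL^2}^2$, which dominates $\|\compv\|_\hs^2$ once $C_V$ is large, thanks to $\|\compv\|_\hs^2 = \|\compv\|_{\boldL^2}^2 + \|\div\compv\|_{L^2}^2$ (here $\bflow = 0$). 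Splitting $\matgr = M_R + i\omega\damp\dens I$ into its Hermitian and anti-Hermitian parts, the $\hsW$-block yields
\begin{align*}
(\cos\tau)^{-1}\Re\bigl(e^{-i\tau\sigma}\langle\matgr\compw,\compw\rangle\bigr) = \langle M_R\compw,\compw\rangle + |\omega|\tan\tau\,\langle\damp\dens\compw,\compw\rangle \geq \bigl(|\omega|\dampl\densl\tan\tau - \|M_R\|_\infty\bigr)\|\compw\|_{\boldL^2}^2,
\end{align*}
which is strictly positive for $\tau$ sufficiently close to $\pi/2$; since $\div\compw = -\qcoeff\cdot\compw$ on $\hsW$, this controls $\|\compw\|_\hs^2$ uniformly. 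The $\hsZ$-block contributes $C_Z\|\compz\|_\hs^2$ directly. Summing and invoking the norm equivalence $\|\dislag\|_\hs^2 \sim \|\compv\|_\hs^2 + \|\compw\|_\hs^2 + \|\compz\|_\hs^2$ from the topological decomposition completes the argument.

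The main obstacle I foresee is bookkeeping rather than analysis: one must verify that every non-block-diagonal contribution to $T^*\opAcow$, including those from the $D$-quadratic form and from both the Hermitian and anti-Hermitian parts of $\matgr$, is genuinely compact. The choice of $\tau$ is forced to depend on $\omega$ because $\|M_R\|_\infty$ contains an $\omega^2$ summand from $\dens|(\omega I + i\angvel\times)\cdot|^2$, but for any fixed $\omega\neq 0$ and $\dampl > 0$ a valid $\tau$ exists. In contrast to Theorem~\ref{thm:wTc-flow}, neither the regularity estimate \eqref{eq:regest-2} nor the multiplier identity \eqref{eq:multiplier} is needed, since $\bflow = 0$ reduces $\|\cdot\|_\hs^2$ to $\|\cdot\|_{\boldL^2}^2 + \|\div\cdot\|_{L^2}^2$ and leaves the $\hsV$-block controlled directly by its divergence part.
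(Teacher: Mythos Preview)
Your proof is correct and follows the same strategy as the paper: a block-diagonal $\op_1$ carrying a $\div$-type quadratic form on $\hsV$, the term $\langle\matgr\compw,\compw'\rangle$ on $\hsW$, and $\langle\compz,\compz'\rangle_\hs$ on $\hsZ$, with all cross and residual terms compact via Theorem~\ref{thm:decomposition}(\ref{it:Vcompactembedding}),(\ref{it:Zfinite}), and coercivity of the $\hsW$-block obtained by a complex rotation exploiting $\dampl>0$. The paper differs only cosmetically, using $\div$ rather than $D=\div+\qcoeff\cdot$ in the $\hsV$-block (pushing the difference into $\op_2$), fixing $C_V=C_Z=1$, and phrasing the rotation geometrically via the observation that $\bigcup_{\boldx}\numran\matgr(\boldx)$ lies in a salient sector spanned by $1$ and $e^{i\theta\sign\omega}$, then taking $\Im(e^{i(\pi-\theta)/2\sign\omega}\,\cdot\,)$ --- which amounts to your $\Re(e^{-i\tau\sigma}\,\cdot\,)$ with a specific choice of $\tau$ rather than your cruder ``$\tau$ close enough to $\pi/2$''.
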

\begin{proof}
We again use the notation \eqref{eq:notation_Helmholtz}. We decompose $T^*\opAcow=\op_1+\op_2$ whereby $\op_1, \op_2\in\blo(\hs)$ are defined by
\begin{align*}
\spl \op_1\dislag,\dislag'\spr_\hs =\;& \spl \sounds^2\dens\div \compv,\div \compv'\spr
+\spl \compv,\compv' \spr
+\spl \matgr \compw,\compw' \spr
+\spl \compz,\compz'\spr_\hs
\end{align*}
and
\begin{align*}
\spl \op_2\dislag,\dislag'\spr_\hs =\;&\spl \div\compv,\grad\pres\cdot\compv' \spr
+\spl \grad\pres\cdot\compv,\div\compv' \spr
+\spl \sounds^{-2}\dens^{-1}\; \nabla\pres\cdot\compv, \nabla\pres\cdot\compv' \spr\\
% +\spl \sounds^{-2}\dens^{-1} \grad\pres\cdot\compv, \grad\pres\cdot\compv' \spr\\
&-\spl \compv,\compv' \spr
-\spl \matgr\compv,\compv' \spr
+\spl \matgr\compv,\compw' \spr
-\spl \matgr\compw,\compv' \spr
\\& +\sescow(\compv+\compw,\compz')+\sescow(\compz,\compv'-\compw')
+\sescow(\compz',\compz')-\spl \compz,\compz'\spr_\hs
\end{align*}
for all $\dislag,\dislag'\in\hs$.
Note that indeed $T^*\opAcow=\op_1+\op_2$ and Operator $\op_2$ is compact due to Theorem~\ref{thm:decomposition}, parts (\ref{it:Vcompactembedding}) and (\ref{it:Zfinite}).
It remains to argue the coercivity of Operator $\op_1$.
To this end we note that due to $\dampl>0$ the set $\bigcup_{\boldx\in\domain}\numran\matgr(\boldx)$ is bounded away from zero and  contained in the closed salient sector spanned by $1$ and $e^{i\theta\sign\omega}$ for some $\theta\in(0,\pi)$. Hence
\begin{align*}
\frac{\sign\omega}{\sin\big((\pi-\theta)/2\big)} \Im ( &e^{i(\pi-\theta)/2\sign\omega} \spl \op_1\dislag,\dislag\spr_\hs)\\
&\geq  \soundsl^2\densl\|\div\compv\|_{L^2}^2 +\|\compv\|_{L^2}^2 +\densl\dampl|\omega|\|\compw\|_{\boldL^2}^2 +\|\compz\|_\hs^2\\
&\geq \min\{\soundsl^2\densl,1\}\min\{\densl\dampl|\omega|,1\}(\|\compv\|_\hs^2+\|\compw\|_\hs^2+\|\compz\|_\hs^2).
\end{align*}
The claim follows now due to the equivalence of norms (see Definition~\ref{def:topdecomp}).
\end{proof}

\subsection{General parameters}\label{subsec:general}
In this subsection we merge the analysis of Subsections~\ref{subsec:bflow} and \ref{subsec:pres_grav}.
To this end we introduce
\begin{align}
\label{eq:matkl}
\matkl&:=i\omega\dens\damp I_{3\times3} -\hess(\pres)+\dens\hess(\grav)+\sounds^{-2}\dens^{-1}\;\nabla\pres\nabla\pres^\top,\\
%+\sounds^{-2}\dens^{-1}\grad\pres\grad\pres^\top\Big),\\
\label{eq:theta}
\theta&:=\max\Big\{0, \sup_{\boldx\in\domain}|\arg\numran \matkl|-\pi/2\Big\},
\end{align}
with
% $\qcoeff$ as defined in \eqref{eq:qcoeff} and 
$\arg$ taking values in $(-\pi,\pi]$.

\begin{theorem}\label{thm:wTc-general}
Let the assumptions of Subsection~\ref{subsec:assumptions} hold. Let $\hs$, $\sescow(\cdot,\cdot)$, $T$ and $\theta$ be as defined in \eqref{eq:hilspX}, \eqref{eq:sesq-a-cow}, \eqref{eq:defT} and \eqref{eq:theta}.
Let $\domain$ be of class $C^{1,1}$ or convex and piece-wise $C^{1,1}$. Let $\sounds, \dens \in W^{1,\infty}$ and $\omega\neq0$.
If
\begin{align*}
\|\sounds^{-1}\bflow\|_{\boldL^\infty}^2<\frac{1}{1+\tan^2\theta},
\end{align*}
then $\opAcow$ is weakly $T$-coercive.
\end{theorem}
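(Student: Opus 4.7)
\medskip

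\textbf{Proof plan.} The idea is to combine the sign-flip Helmholtz strategy used in Theorems~\ref{thm:wTc-flow} and \ref{thm:wTc-presgrav} into a single argument whose rotation angle simultaneously accommodates the sectorial matrix $\matkl$ and the Young-type absorption of the flow cross terms. First I would rewrite $\sescow$ via the identity in \eqref{eq:ses-repr}, so that the second-order part is $\spl \sounds^2\dens(\div\dislag+\qcoeff\cdot\dislag),\div\dislag'+\qcoeff\cdot\dislag'\spr$ (where $\qcoeff=\sounds^{-2}\dens^{-1}\nabla\pres$), and the zeroth-order part is governed by $\matgr=\dens(\omega I+i\angvel\times)^H(\omega I+i\angvel\times)+\matkl$. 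By Theorem~\ref{thm:decomposition}(\ref{it:Wkernel}) this principal part vanishes on $\hsW$, and on $\hsV$ the regularity identity \eqref{eq:multiplier} converts $\|\sounds\sqrt{\dens}(\div\compv+\qcoeff\cdot\compv)\|_{L^2}^2$ into $\|\sounds\sqrt{\dens}\nabla^\top\compv\|^2$ modulo compact terms; the cross term $\Re\spl\sounds\sqrt{\dens}\div\compv,\sounds^{-1}\dens^{-1/2}\nabla\pres\cdot\compv\spr$ is compact by Theorem~\ref{thm:decomposition}(\ref{it:Vcompactembedding}).

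Next I would split $T^*\opAcow=\op_1+\op_2$ analogously to Theorems~\ref{thm:wTc-flow} and \ref{thm:wTc-presgrav}. All bilinear couplings of $\compv$ or $\compw$ with $\compz$ go into $\op_2$ (compact because $\dim\hsZ<\infty$), and so does every bilinear expression involving only zeroth-order $\boldL^2$-terms of $\compv$ paired with $\compv$, $\compw$, or $\compz$ (compact because $\opEmbed\PV$ is compact). The $\partial_\bflow\compv$--$(\omega+i\partial_\bflow+i\angvel\times)\compw$ mixed term is kept in $\op_1$, exactly as in the proof of Theorem~\ref{thm:wTc-flow}, and absorbed later by Young. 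Modulo compact perturbation, $\op_1$ thus reads, in schematic form,
\begin{align*}
\spl\op_1\dislag,\dislag\spr_\hs\;\approx\;&
\|\sounds\sqrt{\dens}\nabla^\top\compv\|^2
-\|\sqrt{\dens}\partial_\bflow\compv\|^2
+\spl\matkl\compv,\compv\spr
+\|\compv\|_{\boldL^2}^2\\
&+\spl\matgr\compw,\compw\spr
+\|\sqrt{\dens}(\omega+i\partial_\bflow+i\angvel\times)\compw\|^2
+\|\compz\|_\hs^2\\
&-2\Re\spl\dens i\partial_\bflow\compv,(\omega+i\partial_\bflow+i\angvel\times)\compw\spr,
\end{align*}
together with the regularization $\tfrac{1}{4\delta}\|K_\eta\compv\|_\hs^2$ introduced as in Theorem~\ref{thm:wTc-flow}.

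To show coercivity I would rotate by $e^{-i\tau\sign\omega}$, choose $\tau\in(\theta,\pi/2)$, and verify positivity. By the definition of $\theta$ the rotation yields $\Re(e^{-i\tau\sign\omega}\spl\matkl\xi,\xi\spr)\ge 0$ and, since $\dens(\omega I+i\angvel\times)^H(\omega I+i\angvel\times)$ is Hermitian positive semi-definite, also $\Re(e^{-i\tau\sign\omega}\spl\matgr\xi,\xi\spr)\ge c_\tau|\xi|^2$ for some $c_\tau>0$ (using $\dampl>0$). The $\compw$-block is then controlled by this sectorial estimate together with $\|\sqrt{\dens}(\omega+i\partial_\bflow+i\angvel\times)\compw\|^2$, giving a bound $\ge C_W\|\compw\|_\hs^2$. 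On the $\compv$-block the cross term with $\compw$ is split by the weighted Young inequality $2ab\le(1-\epsilon)^{-1}a^2+(1-\epsilon)b^2$ with $a=\tan\tau\|\sqrt{\dens}\partial_\bflow\compv\|$ and $b=\|\sqrt{\dens}(\omega+i\partial_\bflow+i\angvel\times)\compw\|$, leaving
\begin{equation*}
\|\sounds\sqrt{\dens}\nabla^\top\compv\|^2-\bigl(1+(1-\epsilon)^{-1}\tan^2\tau\bigr)\|\sqrt{\dens}\partial_\bflow\compv\|^2
+\spl\Re(e^{-i\tau\sign\omega}\matkl)\compv,\compv\spr.
\end{equation*}
Bounding $\|\sqrt{\dens}\partial_\bflow\compv\|^2\le\|\sounds^{-1}\bflow\|_{\boldL^\infty}^2\|\sounds\sqrt{\dens}\nabla^\top\compv\|^2$ and using the hypothesis $\|\sounds^{-1}\bflow\|_{\boldL^\infty}^2<1/(1+\tan^2\theta)$, one can pick $\tau>\theta$ and $\epsilon>0$ small so that $1-(1+(1-\epsilon)^{-1}\tan^2\tau)\|\sounds^{-1}\bflow\|_{\boldL^\infty}^2>0$. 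After absorbing the $K_\eta$-compact term with the $\tfrac{1}{4\delta}\|K_\eta\compv\|^2$ and taking $\delta$ small enough, this yields a lower bound $\ge C_V\|\compv\|_\hs^2$. The $\compz$-block is coercive by construction.

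The main obstacle is the simultaneous choice of $\tau$, $\epsilon$, $\delta$: we must rotate by enough (at least $\theta$) to make $\matkl$ visibly sectorial while rotating by little enough to keep $(1+(1-\epsilon)^{-1}\tan^2\tau)\|\sounds^{-1}\bflow\|_{\boldL^\infty}^2<1$; the identity $1/(1+\tan^2\theta)=\cos^2\theta$ shows that the hypothesis is precisely what makes this window non-empty. The compactness of $\op_2$ and the equivalence of $\|\cdot\|_\hs$ with $(\|\compv\|_\hs^2+\|\compw\|_\hs^2+\|\compz\|_\hs^2)^{1/2}$ (Definition~\ref{def:topdecomp}) then close the argument.
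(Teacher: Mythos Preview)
Your strategy is correct and coincides with the paper's: split $T^*\opAcow$ into a coercive part $\op_1$ and a compact part $\op_2$ using the Helmholtz decomposition, rotate by a fixed angle exceeding $\theta$ so that $\matkl$ becomes sectorial on the $\compw$-block, then absorb the $\partial_\bflow\compv$--$(\omega+i\partial_\bflow+i\angvel\times)\compw$ cross term by a weighted Young inequality, which is exactly where the condition $\|\sounds^{-1}\bflow\|_{\boldL^\infty}^2(1+\tan^2\theta)<1$ enters. The paper parametrises the rotation as $\theta+\tau$ with small $\tau>0$, you call the total angle $\tau\in(\theta,\pi/2)$; this is only a relabelling.

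One bookkeeping slip to fix: with flow present the correct representation is
\[
\sescow(\dislag,\dislag')=\spl\sounds^2\dens(\div+\qcoeff\cdot)\dislag,(\div+\qcoeff\cdot)\dislag'\spr-\spl\matkl\dislag,\dislag'\spr-\spl\dens(\omega+i\partial_\bflow+i\angvel\times)\dislag,(\omega+i\partial_\bflow+i\angvel\times)\dislag'\spr,
\]
so on the $\compw$-diagonal of $T^*\opAcow$ you obtain $\spl\matkl\compw,\compw\spr+\|\sqrt{\dens}(\omega+i\partial_\bflow+i\angvel\times)\compw\|^2$, with $\matkl$ rather than $\matgr$ (otherwise the $\dens(\omega I+i\angvel\times)^H(\omega I+i\angvel\times)$ part is counted twice). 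Likewise the $\compv$-diagonal carries $-\spl\matkl\compv,\compv\spr$, not $+\spl\matkl\compv,\compv\spr$; since this term is zeroth order in $\compv$ it is compact by Theorem~\ref{thm:decomposition}(\ref{it:Vcompactembedding}) and should simply be pushed into $\op_2$ (as the paper does), rather than kept in $\op_1$ with an incorrect sign. Neither slip affects the coercivity argument, but you should clean up the schematic before writing out the proof in full.
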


\begin{proof}
We proceed similarly as in the proof of Theorem~\ref{thm:wTc-flow}.
We note that $T$ is self-inverse and hence bijective.
We again use the notation \eqref{eq:notation_Helmholtz}.
Note that  $\langle T^*\opAcow \dislag,\dislag'\rangle_\hs = \sescow(\compv+\compw+\compz,\compv'-\compw'+\compz')$.
We have to show that $T^*\opAcow=\op_1+\op_2$ can be split into the sum of a coercive operator $\op_1$ and a compact operator $\op_2$. To achieve this we include an operator
\begin{align*}
\PV^*(\opEmbed^*\opEmbed+\frac{1}{4\delta}K^*K)\PV+\PZ^*\PZ
% \PV^*(\opE_{\hsV,\boldL^2}^*\opE_{\hsV,\boldL^2}+\frac{1}{4\delta}K^*K)\PV+\PZ^*\PZ
\end{align*}
into $\op_1$ and
\begin{align*}
-\PV^*(\opEmbed^*\opEmbed+\frac{1}{4\delta}K^*K)\PV-\PZ^*\PZ
% -\PV^*(\opE_{\hsV,\boldL^2}^*\opE_{\hsV,\boldL^2}+\frac{1}{4\delta}K^*K)\PV-\PZ^*\PZ
\end{align*}
into $\op_2$ whereby the compact operator $K\colon\hsV\to\hs$ and the scalar $\delta>0$ will be chosen later
and define $\op_1, \op_2\in\blo(\hs)$ by
\begin{align*}
\spl \op_1\dislag,\dislag'\spr_\hs =\;& \spl \sounds^2\dens\div \compv,\div \compv'\spr
-\spl \dens i\partial_\bflow \compv,i\partial_\bflow \compv' \spr
+\spl \compv,\compv'\spr +\frac{1}{4\delta}\spl K\compv,K\compv' \spr_\hs\\
&+\spl \dens(\omega+i\partial_\bflow+i\angvel\times)\compw,(\omega+i\partial_\bflow+i\angvel\times)\compw' \spr
+\spl \matkl\compw,\compw'\spr\\
&-\spl \dens(\omega+i\partial_\bflow+i\angvel\times)\compw,i\partial_\bflow \compv' \spr
+\spl \dens i\partial_\bflow \compv,(\omega+i\partial_\bflow+i\angvel\times)\compw' \spr\\
&+\spl \compz,\compz' \spr_\hs
\end{align*}
and
\begin{align*}
\spl \op_2\dislag,\dislag'\spr_\hs =\;&
-\spl \compv,\compv'\spr-\frac{1}{4\delta}\spl K\compv,K\compv' \spr_\hs-\spl \compz,\compz' \spr_\hs\\
&+\spl \div\compv,\grad\pres\cdot\compv' \spr
+\spl \grad\pres\cdot\compv,\div\compv' \spr
+\spl \sounds^{-2}\dens^{-1}\; \nabla\pres\cdot\compv, \nabla\pres\cdot\compv' \spr\\
&-\spl \dens i\partial_\bflow \compv,(\omega+i\angvel\times)\compv' \spr
-\spl \dens (\omega+i\angvel\times)\compv,i\partial_\bflow \compv' \spr
-\spl \matgr \compv,\compv' \spr\\
&-\spl \dens(\omega+i\partial_\bflow+i\angvel\times)\compw,(\omega+i\angvel\times) \compv' \spr\\
&+\spl \dens(\omega+i\angvel\times) \compv,(\omega+i\partial_\bflow+i\angvel\times)\compw' \spr\\
&-\spl \matkl \compw,\compv' \spr
+\spl \matkl \compv,\compw' \spr\\
&+\sescow(\compv+\compw,\compz')+\sescow(\compz,\compv'-\compw')
+\sescow(\compz',\compz')
\end{align*}
for all $\dislag,\dislag'\in\hs$. Note that indeed $T^*\opAcow=\op_1+\op_2$ and that 
$\op_2$ is compact due to Theorem \ref{thm:decomposition}, parts (\ref{it:Vcompactembedding}) and (\ref{it:Zfinite}),
and the compactness of $K$.  
To prove the coercivity of $\op_1$ we introduce a further parameter $\tau\in(0,\pi/2)$ and compute
\begin{align*}
\frac{1}{\cos(\theta+\tau)}
\Re\Big( &e^{-i(\theta+\tau)\sign\omega} \spl \op_1 \dislag,\dislag \spr_\hs \Big) =\\
&\|\sounds\sqrt{\dens}\div \compv\|^2_{L^2}
-\| \sqrt{\dens}\partial_\bflow \compv \|^2_{\boldL^2}
+\|\compv\|_{\boldL^2}
+\frac{1}{4\delta}\| K\compv\|_\hs^2\\
&+\| \sqrt{\dens}(\omega+i\partial_\bflow+i\angvel\times)\compw \|^2_{\boldL^2}
+\frac{|\omega|\sin\tau}{\cos(\theta+\tau)} \| \sqrt{\damp\dens} \compw \|^2_{\boldL^2}\\
&+\|\compz\|_\hs^2
-2\tan(\theta+\tau)\sign\omega\,\Im\big(\spl \dens i\partial_\bflow \compv,(\omega+i\partial_\bflow+i\angvel)\compw \spr_{\boldL^2}\big).
\end{align*}
Estimating the last term by the Cauchy-Schwarz inequality and the weighted Young inequality $2ab\leq (1-\epsilon)^{-1}a^2+(1-\epsilon)b^2$ 
with another parameter $\epsilon>0$, $a=\tan(\theta+\tau) \|\sqrt{\dens}\partial_\bflow \compv\|_{\boldL^2}$, and 
$b= \|\sqrt{\dens}(\omega+i\partial_\bflow+i\angvel\times)\compw\|_{\boldL^2}$ we obtain 
\begin{align}\label{eq:abhierweiter}
\begin{split}
\frac{1}{\cos(\theta+\tau)}
\Re\Big( &e^{-i(\theta+\tau)\sign\omega} \spl \op_1 \dislag,\dislag \spr_\hs \Big) \geq\\
&\| \sounds\sqrt{\dens}\div \compv\|^2_{L^2}
-\big(1+(1-\epsilon)^{-1}\tan^2(\theta+\tau)\big)\| \sqrt{\dens}\partial_\bflow \compv \|^2_{\boldL^2}\\
&+\|\compv\|_{\boldL^2}+\frac{1}{4\delta}\|K\compv\|_\hs^2
+\|\compz\|_\hs^2\\
&+\epsilon\| \sqrt{\dens}(\omega+i\partial_\bflow+i\angvel\times)\compw \|^2_{\boldL^2}
+\frac{|\omega|\sin\tau}{\cos(\theta+\tau)} \| \sqrt{\damp\dens}\compw \|^2_{\boldL^2}.
\end{split}
\end{align}
Now we choose the operator $K$ as $K:=K_\eta$ with $\eta:=\sounds\sqrt{\dens}$ and $K_\eta$ as in Theorem~\ref{thm:decomposition}. 
Due to this theorem the operator $K$ is indeed compact and we have 
\begin{align*}
\|\sounds\sqrt{\dens}\div\compv\|_{L^2}^2 = \|\sounds\sqrt{\dens}\nabla^\top\compv\|_{(L^2)^{3x3}}^2 +\spl K\compv,\compv\spr_\hs
\end{align*}
for each $\compw\in\hsV$.
We choose now $\epsilon$ and $\tau$ small enough such that
\begin{align*}
0<1-\big(1+(1-\epsilon)^{-1}\tan^2(\theta+\tau)\big)\|\sounds^{-1}\bflow\|_{\boldL^\infty}^2=:C_{\epsilon,\tau,\theta}.
\end{align*}
Bounding $\|\sqrt{\dens}\partial_\bflow\compv\|^2_{\boldL^2}$ by $\|\sounds^{-1}\bflow\|_{\boldL^\infty}^2\|\sounds\sqrt{\dens}\nabla^\top\compv\|_{(L^2)^{3\times3}}^2$
we can estimate
\begin{align}\label{eq:estimateinproof}
\begin{split}
\|\sounds\sqrt{\dens}\div\compv\|^2_{L^2}&-\big(1+(1-\epsilon)^{-1}\tan^2(\theta+\tau)\big)\|\sqrt{\dens}\partial_\bflow\compv\|^2_{\boldL^2}\\
&\geq \soundsl^2\densl C_{\epsilon,\tau} |\compv|_{\boldH^1}^2 -|\spl K\compv,\compv\spr_\hs|\\
&\geq \soundsl^2\densl C_{\epsilon,\tau} |\compv|_{\boldH^1}^2 -\frac{1}{4\delta}\|K\compv\|_\hs^2 
-\delta\|\compv\|_\hs^2.
\end{split}
\end{align}
We continue to estimate
\begin{align*}
\| \sounds\sqrt{\dens}\div \compv\|^2_{L^2}
&-\big(1+(1-\epsilon)^{-1}\tan^2(\theta+\tau)\big)\| \sqrt{\dens}\partial_\bflow \compv \|^2_{\boldL^2}
+\|\compv\|_{\boldL^2}+\frac{1}{4\delta}\|K\compv\|_\hs^2\\
&\geq \soundsl^2\densl C_{\epsilon,\tau} |\compv|_{\boldH^1}^2 +\|\compv\|_{\boldL^2}^2 -\delta\|\compv\|_\hs^2
\end{align*}
There exists a constant $C_V>0$ such that
\begin{align*}
\soundsl^2\densl C_{\epsilon,\tau} |\compv|_{\boldH^1}^2 +\|\compv\|_{\boldL^2}^2 \geq C_V\|\compv\|_\hs^2
\end{align*}
for each $\compv\in\hsV$. Thus
\begin{align*}
\| \sounds\sqrt{\dens}\div \compv\|^2_{L^2}
&-\big(1+(1-\epsilon)^{-1}\tan^2(\theta+\tau)\big)\| \sqrt{\dens}\partial_\bflow \compv \|^2_{\boldL^2}
+\|\compv\|_{\boldL^2}+\frac{1}{4\delta}\|K\compv\|_\hs^2\\
&\geq (C_V-\delta)\|\compv\|_\hs^2.
\end{align*}
Now we choose $\delta<C_V$.
The second part can be estimated using a weighted Young inequality and $\underline{\damp}>0$ to obtain
\begin{align*}
\epsilon\|\sqrt{\dens}(\omega+i\partial_\bflow+i\angvel\times)\compw\|^2_{\boldL^2}
&+\frac{|\omega|\sin\tau}{\cos(\theta+\tau)}\|\sqrt{\damp\dens}\compw\|^2_{\boldL^2}\\
&\geq C_W(\|\partial_\bflow\compw\|^2_{\boldL^2} + \|\compw\|^2_{\boldL^2})
= C_W\|\compw\|_\hs^2
\end{align*}
for some $C_W>0$. 
Due to the equivalence of norms (see Definition~\ref{def:topdecomp}) the former estimates already yield the coercivity of $\op_1$.
\end{proof}
We observe that in contrast to Theorems~\ref{thm:wTc-flow} and \ref{thm:wTc-presgrav} the assumptions of Theorem~\ref{thm:wTc-general} depend on $\omega$ (apart from $\omega\neq0$). However it holds
$\lim_{|\omega|\to+\infty}\theta(\omega)=0$. Hence for large frequencies we asymptotically recover the very same assumption of Theorem~\ref{thm:wTc-flow}. Moreover, Theorem~\ref{thm:wTc-general} is indeed a generalization of Theorem~\ref{thm:wTc-flow} and Theorem~\ref{thm:wTc-presgrav}: If $\pres$ and $\grav$ are constant, then $\theta=0$. If $\bflow=0$, then the assumed estimate is a tautology.

We refer to Theorem~\ref{thm:wTc-general-strich} in the appendix for an adaptation of Theorem~\ref{thm:wTc-general} for rough data.

\subsection{Full equations}\label{subsec:fullequations}
Let us now discuss the original sesquilinear form $\ses(\cdot,\cdot)$ as defined in \eqref{eq:sesq-a}. To this end we introduce
% the embedding operator
% \begin{align*}
% \opEE:=E_{\hs\hookrightarrow\boldL^2}&\in \blo(\hs,\boldL^2)
% \end{align*}
% and
\begin{align*}
\opQ &\in \blo(\boldL^2,\hsgrav), \quad \spl\opQ\boldxi,\graveu\spr_{\hsgrav}:=\spl\dens\boldxi,\grad\graveu\spr
\quad\text{for all}\quad \boldxi\in\boldL^2, \graveu\in \hsgrav,
\end{align*}
such that
\begin{align*}
\op &= \bpm \opAcow & \opEmbed^*Q^* \\ Q\opEmbed & (4\pi G)^{-1}I_{\hsgrav} \epm.
\end{align*}
Further for $\complsc\in\setC$, let
\begin{align}\label{eq:Tone}
T_1^\complsc: = \bpm \ol{\complsc} T & 0 \\ 0 & I_{\hsgrav} \epm.
\end{align}

\begin{theorem}\label{thm:wTc-full}
Let the assumptions of Subsection~\ref{subsec:assumptions} hold. Let $\hs$, $\hsgrav$, $\ses(\cdot,\cdot)$, $T^\complsc_1$ and $\theta$ be as defined in \eqref{eq:hilspX}, \eqref{eq:hsgrav}, \eqref{eq:sesq-a}, \eqref{eq:Tone} and \eqref{eq:theta}.
Let $\domain$ be of class $C^{1,1}$ or convex and piece-wise $C^{1,1}$. Let $\sounds, \dens \in W^{1,\infty}$ and $\omega\neq0$.
If
\begin{align*}
\|\sounds^{-1}\bflow\|_{\boldL^\infty}^2<\frac{1}{(1+\tan^2\theta)},
\end{align*}
then there exists $\complsc\in\setC$ such that $\op$ is weakly $T^\complsc_1$-coercive.
\end{theorem}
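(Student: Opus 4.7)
The plan is to reduce to Theorem~\ref{thm:wTc-general} by using the observation from Subsection~\ref{subsec:cowling} that, under the hypothesis $\dens\in W^{1,\infty}$, the off-diagonal coupling between $\dislag$ and $\graveu$ in $\op$ is already a compact perturbation. Bijectivity of $T_1^\complsc=\diag(\ol\complsc T,I_{\hsgrav})$ is immediate from the bijectivity (self-inverseness) of $T$, as long as $\complsc\neq 0$, so only weak coercivity of $(T_1^\complsc)^*\op$ needs to be established.

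A direct computation gives, for every $\complsc\in\setC\setminus\{0\}$,
\[
\langle (T_1^\complsc)^*\op(\dislag,\graveu),(\dislag,\graveu)\rangle_{\hs\times\hsgrav}
= \complsc\,\sescow(\dislag,T\dislag) + (4\pi G)^{-1}\|\graveu\|_{\hsgrav}^2 + R(\dislag,\graveu),
\]
where $R$ collects the two off-diagonal terms involving $\opEmbed^*\opQ^*$ and $\opQ\opEmbed$. By the integration-by-parts identity~\eqref{eq:offdiagonal} (valid because $\dens\in W^{1,\infty}$), both terms in $R$ take the form $\langle K\graveu,\cdot\rangle_\hs$ (and conjugate) for a compact operator $K\in\blo(\hsgrav,\hs)$, with compactness coming from the Rellich embedding $\hsgrav\hookrightarrow L^2(\domain)$ on the bounded set $\domain$. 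Hence $R$ is generated by a compact operator on $\hs\times\hsgrav$.

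Next I would set $\complsc:=e^{-i(\theta+\tau)\sign\omega}$ for the same $\tau\in(0,\pi/2)$ used in the proof of Theorem~\ref{thm:wTc-general}. Under the hypothesis $\|\sounds^{-1}\bflow\|_{\boldL^\infty}^2<1/(1+\tan^2\theta)$, that proof yields a coercivity estimate of the form
\[
\Re\bigl(\complsc\,\sescow(\dislag,T\dislag)\bigr)\geq c\|\dislag\|_\hs^2-\Re\langle K_0\dislag,\dislag\rangle_\hs
\]
with $K_0\in\blo(\hs)$ compact (just decompose $T^*\opAcow=\op_1+\op_2$ as there, with $\op_2$ compact, and apply the coercivity of $\op_1$). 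Since $(4\pi G)^{-1}\|\graveu\|_{\hsgrav}^2$ is already real and coercive in $\graveu$ and the remainder $R$ is compact, summing yields
\[
\Re\bigl\langle (T_1^\complsc)^*\op(\dislag,\graveu),(\dislag,\graveu)\bigr\rangle\geq c'\bigl(\|\dislag\|_\hs^2+\|\graveu\|_{\hsgrav}^2\bigr)-\Re\langle K_1(\dislag,\graveu),(\dislag,\graveu)\rangle
\]
for some compact $K_1\in\blo(\hs\times\hsgrav)$, i.e.\ weak $T_1^\complsc$-coercivity of $\op$.

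I do not anticipate a serious technical obstacle beyond what is already in the paper, since the heavy lifting for the $\dislag$-block is done in Theorem~\ref{thm:wTc-general} and the compactness of the off-diagonal coupling is the content of~\eqref{eq:offdiagonal}. The only subtlety worth flagging is that a single choice of $\complsc$ must simultaneously align the $\dislag$-block with its coercive direction and leave the $\graveu$-block coercive; this is painless precisely because $T_1^\complsc$ carries $I_{\hsgrav}$ (rather than $\ol\complsc I_{\hsgrav}$) in its second diagonal entry, so the real, positive gravitational diagonal is invariant under the rotation chosen for the $\dislag$-block.
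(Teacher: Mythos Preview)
Your argument is correct and follows essentially the same route as the paper: choose $\complsc=e^{-i(\theta+\tau)\sign\omega}$, use the weak coercivity of $\complsc T^*\opAcow$ from Theorem~\ref{thm:wTc-general} for the $(1,1)$-block, use the manifest coercivity of $(4\pi G)^{-1}I_{\hsgrav}$ for the $(2,2)$-block, and push the off-diagonal blocks $\opQ\opEmbed$, $\opEmbed^*\opQ^*$ into the compact remainder via \eqref{eq:offdiagonal} (which is exactly where the hypothesis $\dens\in W^{1,\infty}$ enters).

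The only difference is organisational. The paper additionally shifts the positive semi-definite piece $\PW^*\opEmbed^*\opQ^*\opQ\opEmbed\PW$ from the compact part into the coercive part $\tilde\op_1$. Under the present hypotheses this is redundant, since $\opQ\opEmbed$ is already compact; but it is precisely the device that survives when $\dens\notin W^{1,\infty}$ and the off-diagonals are no longer compact (cf.\ Theorem~\ref{thm:wTc-full-strich}, where a Schur complement and the operator $T_2^\complsc$ are used instead). Your more direct argument is therefore a legitimate and slightly shorter proof of Theorem~\ref{thm:wTc-full}, while the paper's version is written with an eye toward the low-regularity extension.
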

\begin{proof}
We proceed as in the proof of Theorem~\ref{thm:wTc-general}. Let $\tau$ be as therein and $\complsc:=e^{-i(\theta+\tau)\sign\omega}$. We decompose $T^*\opAcow-T^*\opEmbed^*\opQ^*\opQ\opEmbed=\tilde\op_1+\tilde\op_2$ with
\begin{align*}
\tilde\op_1:=\op_1+\PW^*\opEmbed^*\opQ^*\opQ\opEmbed\PW
\end{align*}
and
\begin{align*}
\tilde\op_2&:=\op_2
+(\PV+\PZ)^*\opEmbed^*\opQ^*\opQ\opEmbed(\PV+\PZ)\\
&-(\PV+\PZ)^*\opEmbed^*\opQ^*\opQ\opEmbed\PW
+\PW^*\opEmbed^*\opQ^*\opQ\opEmbed(\PV+\PZ)
\end{align*}
and $\op_1$ and $\op_2$ as in the proof of Theorem~\ref{thm:wTc-general}, such that
$(T^\complsc_1)^*\op=\boldA_1+\boldA_2$ with
$\boldA_1=\bpm \complsc\tilde\op_1 & 0 \\ 0 & (4\pi G)^{-1}I_{H^1_*} \epm$ and
$\boldA_2=\bpm \complsc\tilde\op_2 & \complsc T^*\opEmbed^*Q^* \\ Q\opEmbed & 0 \epm$.
Since $\PW^*\opEmbed^*\opQ^*\opQ\opEmbed\PW$ is positive semi definite, we can simply repeat the lines of the proof of Theorem~\ref{thm:wTc-general} to deduce that $\boldA_1$ is coercive.
The composed operator $\opQ\opEmbed$ and its adjoint $\opEmbed^*\opQ^*$ are compact as explained in Subsection~\ref{subsec:cowling}. The operator $\tilde\op_2$ is compact, because $\op_2$ is so and due to Theorem~\ref{thm:decomposition}, parts (\ref{it:Vcompactembedding}) and (\ref{it:Zfinite}). Hence $\boldA_2$ is compact and the claim follows.
\end{proof}
We refer to Theorem~\ref{thm:wTc-full-strich} in the appendix for an adaptation of Theorem~\ref{thm:wTc-full} for rough data.

\section{Conclusions}\label{sec:conclusion}
In this article we studied the time-harmonic linear equations of stellar oscillations without magnetic forces. 
%We considered a damping model which accounts for attenuation by gravitational radiation and viscosity. However, our damping model differs to existing works and plays a crucial role in the analysis.
We have proved the well-posedness of the equations under suitable assumptions which are essentially a smallness assumption on the $\boldL^\infty$-norm of the background flow (subsonic flows).  
By standard arguments this also implies finite sensitivity of the solution to the imperfect knowledge of coefficients or of the shape of the domain. 
Furthermore, our analysis provides a starting point for further research on the construction of reliable and provably convergent numerical methods.

%Our analysis does not exclude the possibility of an essential spectrum (with the frequency being the spectral parameter) in a neighborhood of zero in the absence of damping.

\appendix

\section*{Appendix: Low regularity results}

In this appendix we show that the regularity assumptions of Theorems~\ref{thm:wTc-general} and \ref{thm:wTc-full} can be relaxed at the 
price of more restrictive bounds on $\|\bflow\|_{\boldL^{\infty}}$. 
If $\sounds$ and/or $\dens$ are not smooth, then the bounds on 
$\|\bflow\|_{\boldL^{\infty}}$ depend on $\soundsl$ 
and/or $\densl$. 

%%%%%%%%%%%%%%%%%%%%%%%%%%%%%%%%%%%%%%%%%%%%%%%%%%%%%%%%%%%%%%%%
%%%%%%%%%%%%%% WARNINNG: theorem number is hard-coded !!!!!!!!!!
%%%%%%%%%%%%%%%%%%%%%%%%%%%%%%%%%%%%%%%%%%%%%%%%%%%%%%%%%%%%%%%%
% \def\labeldefi{\ref{thm:wTc-general}}
\begin{customthm}{3.10'} %{\labeldefi'}
\label{thm:wTc-general-strich}
Let the assumptions of Subsection~\ref{subsec:assumptions} hold. Let $\hs$, $\sescow(\cdot,\cdot)$, $T$ and $\theta$ be as defined in \eqref{eq:hilspX}, \eqref{eq:sesq-a-cow}, \eqref{eq:defT} and \eqref{eq:theta}.
Let $\Creg$ be as in \eqref{eq:regest-2} and $\omega\neq0$.
Then $\opAcow$ is weakly $T$-coercive if one of the following lines holds true:\medskip\\
\noindent\resizebox{0.99\textwidth}{!}{
\begin{tabular}{l|l|l|l|l}  
$\#$ & domain $\domain$ & $\sounds$ & $\dens$ & estimate\\
\hline
a) & convex & & & $0<\Creg^2\soundsl^2\densl-(1+\tan^2\theta)\densu\|\bflow\|_{\boldL^\infty}^2$\\
b) & $C^{1,1}$ or convex $\&$ piece-wise $C^{1,1}$ & & & $0<\soundsl^2\densl-(1+\tan^2\theta)\densu\|\bflow\|_{\boldL^\infty}^2$\\
c) & $C^{1,1}$ or convex $\&$ piece-wise $C^{1,1}$ & $W^{1,\infty}$ & & $0<\densl-(1+\tan^2\theta)\densu\|\sounds^{-1}\bflow\|_{\boldL^\infty}^2$\\
d) & $C^{1,1}$ or convex $\&$ piece-wise $C^{1,1}$ & & $W^{1,\infty}$ & $0<\soundsl^2-(1+\tan^2\theta)\|\bflow\|_{\boldL^\infty}^2$\\
e) & $C^{1,1}$ or convex $\&$ piece-wise $C^{1,1}$ & $W^{1,\infty}$ & $W^{1,\infty}$ & $0<1-(1+\tan^2\theta)\|\sounds^{-1}\bflow\|_{\boldL^\infty}^2$
\end{tabular}
}
\end{customthm}
\begin{proof}
We proceed as in the proof of Theorem~\ref{thm:wTc-general} until Equation~\eqref{eq:abhierweiter} and continue the proof from thereon.
We choose $K$ as listed in the following table and choose $\epsilon>0$ and $\tau\in(0,\pi/2)$ such that $C_{\epsilon,\tau,\theta}>0$ for the given expressions $C_{\epsilon,\tau,\theta}$.\medskip\\
\noindent\resizebox{0.99\textwidth}{!}{
\begin{tabular}{l|l|l|l|l}
\# & $C=$ & $\eta$ & $K=$ & $C_{\epsilon,\tau,\theta}=$\\
\hline
a) & $\Creg^2$ & $1$ & $(\Creg^2-1)\PV^*\opEmbed^*\opEmbed\PV$
& $\Creg^2\soundsl^2\densl-(1+\big(1-\epsilon)^{-1}\tan^2(\theta+\tau)\big)\densu\|\bflow\|_{\boldL^\infty}^2$\\
b) & $1$ & $1$  & $K_{\eta=1}$ & $\soundsl^2\densl-(1+\big(1-\epsilon)^{-1}\tan^2(\theta+\tau)\big)\densu\|\bflow\|_{\boldL^\infty}^2$\\
c) & $1$ & $\sounds$  & $K_{\eta=\sounds}$ & $\densl-(1+\big(1-\epsilon)^{-1}\tan^2(\theta+\tau)\big)\densu\|\sounds^{-1}\bflow\|_{\boldL^\infty}^2$\\
d) & $1$ & $\dens^{1/2}$  & $K_{\eta=\dens^{1/2}}$ & $\soundsl^2-\big(1+(1-\epsilon)^{-1}\tan^2(\theta+\tau)\big)\|\bflow\|_{\boldL^\infty}^2$\\
e) & $1$ & $\sounds\dens^{1/2}$  & $K_{\eta=\sounds\dens^{1/2}}$ & $1-\big(1+(1-\epsilon)^{-1}\tan^2(\theta+\tau)\big)\|\sounds^{-1}\bflow\|_{\boldL^\infty}^2$
\end{tabular}
}\medskip\\
\noindent
Here
% $E_{\boldL^2}$ denotes the embedding $\hs\hookrightarrow\boldL^2$, and 
the operators $K_\eta$ are as in in Theorem~\ref{thm:decomposition}.
Due to this theorem the operator $K$ is compact in each case, 
and we have 
\begin{align*}
\|\eta\div\compv\|_{L^2}^2 \geq C\|\eta\nabla^\top\compv\|_{(L^2)^{3x3}}^2 +\spl K\compv,\compv\spr_\hs
\end{align*}
with the constants $C>0$ and the functions $\eta$ also listed in the table above.
Bounding $\|\sqrt{\dens}\partial_\bflow\compv\|^2_{\boldL^2}$ by $\overline{\dens}\|\bflow\|_{\boldL^\infty}^2\|\nabla^\top\compv\|_{L^2}^2$ 
in cases a) and b), by $\overline{\dens}\|\sounds^{-1}\bflow\|_{\boldL^\infty}^2\|\sounds\nabla^\top\compv\|_{L^2}^2$ in case c), 
by $\|\bflow\|_{\boldL^\infty}^2\|\sqrt{\dens}\nabla^\top\compv\|_{L^2}^2$ in case d), 
and by $\|\sounds^{-1}\bflow\|_{\boldL^\infty}^2\|\sqrt{\dens}\sounds\nabla^\top\compv\|_{L^2}^2$ in case e), 
we can estimate
\begin{align*}
\|\sounds\sqrt{\dens}\div\compv\|^2_{L^2}&-\big(1+(1-\epsilon)^{-1}\tan^2(\theta+\tau)\big)\|\sqrt{\dens}\partial_\bflow\compv\|^2_{\boldL^2}\\
&\geq \min\{1,\soundsl^2\}\min\{1,\densl\} C_{\epsilon,\tau} |\compv|_{\boldH^1}^2 -|\spl K\compv,\compv\spr_\hs|\\
&\geq \min\{1,\soundsl^2\}\min\{1,\densl\} C_{\epsilon,\tau} |\compv|_{\boldH^1}^2 -\frac{1}{4\delta}\|K\compv\|_\hs^2
-\delta\|\compv\|_\hs^2.
\end{align*}
We can now continue the proof of Theorem~\ref{thm:wTc-general} after \eqref{eq:estimateinproof}
with the previous estimate instead of \eqref{eq:estimateinproof}.
\end{proof}

For $\complsc\in\setC$, let
\begin{align}\label{eq:Ttwo}
T^\complsc_2: = \bpm I_\hs & \opEmbed^*Q^* \\ 0 & I_{H^1_*} \epm
T^\complsc_1
\bpm I_\hs & -\opEmbed^*Q^* \\ 0 & I_{H^1_*} \epm
\end{align}

%%%%%%%%%%%%%%%%%%%%%%%%%%%%%%%%%%%%%%%%%%%%%%%%%%%%%%%%%%%%%%%%
%%%%%%%%%%%%%% WARNINNG: theorem number is hard-coded !!!!!!!!!!
%%%%%%%%%%%%%%%%%%%%%%%%%%%%%%%%%%%%%%%%%%%%%%%%%%%%%%%%%%%%%%%%
% \def\labeldefii{\ref{thm:wTc-full}}
\begin{customthm}{3.11'} %{\labeldefii'}
\label{thm:wTc-full-strich}
Let the assumptions of Subsection~\ref{subsec:assumptions} hold. Let $\hs$, $\hsgrav$, $\ses(\cdot,\cdot)$, $T^\complsc_1$, $T^\complsc_2$ and $\theta$ be as defined in \eqref{eq:hilspX}, \eqref{eq:hsgrav}, \eqref{eq:sesq-a}, \eqref{eq:Tone}, \eqref{eq:Ttwo} and \eqref{eq:theta}. Let $\Creg$ be as in \eqref{eq:regest-2} and $\omega\neq0$.
If one of the following lines holds true:\medskip\\
\noindent\resizebox{0.99\textwidth}{!}{
\begin{tabular}{l|l|l|l|l}  
$\#$ & domain $\domain$ & $\sounds$ & $\dens$ & estimate\\
\hline
a) & convex & & & $0<\Creg^2\soundsl^2\densl-(1+\tan^2\theta)\densu\|\bflow\|_{\boldL^\infty}^2$\\
b) & $C^{1,1}$ or convex $\&$ piece-wise $C^{1,1}$ & & & $0<\soundsl^2\densl-(1+\tan^2\theta)\densu\|\bflow\|_{\boldL^\infty}^2$\\
c) & $C^{1,1}$ or convex $\&$ piece-wise $C^{1,1}$ & $W^{1,\infty}$ & & $0<\densl-(1+\tan^2\theta)\densu\|\sounds^{-1}\bflow\|_{\boldL^\infty}^2$\\
d) & $C^{1,1}$ or convex $\&$ piece-wise $C^{1,1}$ & & $W^{1,\infty}$ & $0<\soundsl^2-(1+\tan^2\theta)\|\bflow\|_{\boldL^\infty}^2$\\
e) & $C^{1,1}$ or convex $\&$ piece-wise $C^{1,1}$ & $W^{1,\infty}$ & $W^{1,\infty}$ & $0<1-(1+\tan^2\theta)\|\sounds^{-1}\bflow\|_{\boldL^\infty}^2$
\end{tabular}
}\medskip\\
then there exists $\complsc\in\setC$ such that $\op$ is weakly $T^\complsc_2$-coercive.
If in addition $\dens\in W^{1,\infty}$, then $\op$ is also weakly $T^\complsc_1$-coercive.
\end{customthm}
\begin{proof}
We exploit the Schur factorization
\begin{align*}
\op&= \bpm I_\hs & \opEmbed^*Q^* \\ 0 & I_{\hsgrav} \epm
\bpm \opAcow - \opEmbed^*Q^*Q\opEmbed & 0 \\ 0 & (4\pi G)^{-1}I_{\hsgrav} \epm
\bpm I_\hs & 0 \\ Q\opEmbed & I_{\hsgrav} \epm
\end{align*}
such that
\begin{align*}
(T^\complsc_2)^*A &=
\bpm I_\hs & \opEmbed^*Q^* \\ 0 & I_{\hsgrav} \epm
\bpm \complsc T^*(\opAcow - \opEmbed^*Q^*Q\opEmbed) & 0 \\ 0 & (4\pi G)^{-1}I_{\hsgrav} \epm
\bpm I_\hs & 0 \\ Q\opEmbed & I_{\hsgrav} \epm \\
&=
\bpm I_\hs & \opEmbed^*Q^* \\ 0 & I_{\hsgrav} \epm
\bpm \sigma \tilde\op_1 & 0 \\ 0 & (4\pi G)^{-1}I_{\hsgrav} \epm
\bpm I_\hs & 0 \\ Q\opEmbed & I_{\hsgrav} \epm \\
&\phantom{=}+
\bpm I_\hs & \opEmbed^*Q^* \\ 0 & I_{\hsgrav} \epm
\bpm \sigma \tilde\op_2 & 0 \\ 0 & 0 \epm
\bpm I_\hs & 0 \\ Q\opEmbed & I_{\hsgrav} \epm
\end{align*}
with $\tilde\op_1$ and $\tilde\op_2$ as in the proof of Theorem~\ref{thm:wTc-full}.
We follow the lines of the proofs of Theorems~\ref{thm:wTc-general-strich} and \ref{thm:wTc-full} to deduce that $\tilde\op_1$ is coerive and $\tilde\op_2$ is compact. Hence the first summand of the former decomposition is coercive and the second summand is compact. In the case $\dens\in W^{1,\infty}$ the operator $\opQ\opEmbed$ is compact and so $T^\complsc_1$ and $T^\complsc_2$ differ only by a compact operator. Hence the claim is proven.
\end{proof}

\bibliographystyle{amsplain}
\bibliography{short_biblio}

\providecommand{\bysame}{\leavevmode\hbox to3em{\hrulefill}\thinspace}
\providecommand{\MR}{\relax\ifhmode\unskip\space\fi MR }
% \MRhref is called by the amsart/book/proc definition of \MR.
\providecommand{\MRhref}[2]{%
  \href{http://www.ams.org/mathscinet-getitem?mr=#1}{#2}
}
\providecommand{\href}[2]{#2}
\begin{thebibliography}{10}

\bibitem{AmroucheBernardiDaugeGirault:98}
C.~Amrouche, C.~Bernardi, M.~Dauge, and V.~Girault, \emph{Vector potentials in
  three-dimensional non-smooth domains}, Math. Methods Appl. Sci. \textbf{21}
  (1998), no.~9, 823--864. \MR{1626990}

\bibitem{BecacheBonnetBDLegendre:06}
{\'E}liane B{\'e}cache, Anne-Sophie Bonnet-BenDhia, and Guillaume Legendre,
  \emph{Perfectly matched layers for time-harmonic acoustics in the presence of
  a uniform flow}, SIAM J. Numer. Anal. \textbf{44} (2006), no.~3, 1191--1217.
  \MR{2231861}

\bibitem{BensalahJolyMercier:18}
A.~Bensalah, P.~Joly, and J.-F. Mercier, \emph{Well-posedness of a generalized
  time-harmonic transport equation for acoustics in flow}, Mathematical Methods
  in the Applied Sciences \textbf{41} (2018), no.~8, 3117--3137.

\bibitem{beyer:02}
Horst~R Beyer, \emph{A framework for perturbations and stability of
  differentially rotating stars}, Proceedings of the Royal Society of London.
  Series A: Mathematical, Physical and Engineering Sciences \textbf{458}
  (2002), no.~2018, 359--380.

\bibitem{BuffaCostabelSchwab:02}
A.~Buffa, M.~Costabel, and C.~Schwab, \emph{Boundary element methods for
  {M}axwell's equations on non-smooth domains}, Numer. Math. \textbf{92}
  (2002), no.~4, 679--710. \MR{1935806 (2003k:65159)}

\bibitem{Buffa:05}
Annalisa Buffa, \emph{Remarks on the discretization of some noncoercive
  operator with applications to heterogeneous {M}axwell equations}, SIAM
  Journal on Numerical Analysis \textbf{43} (2005), no.~1, 1--18.

\bibitem{ChabassierDurufle:18}
Juliette Chabassier and Marc Durufl{\'e}, \emph{Solving time-harmonic
  {G}albrun's equation with an arbitrary flow. application to helioseismology},
  Research Report RR-9192, {INRIA Bordeaux}, July 2018.

\bibitem{ChristensenDEtal:96}
J.~Christensen-Dalsgaard, W.~D{\"a}ppen, S.~V. Ajukov, E.~R. Anderson, H.~M.
  Antia, S.~Basu, V.~A. Baturin, G.~Berthomieu, B.~Chaboyer, S.~M. Chitre,
  A.~N. Cox, P.~Demarque, J.~Donatowicz, W.~A. Dziembowski, M.~Gabriel, D.~O.
  Gough, D.~B. Guenther, J.~A. Guzik, J.~W. Harvey, F.~Hill, G.~Houdek, C.~A.
  Iglesias, A.~G. Kosovichev, J.~W. Leibacher, P.~Morel, C.~R. Proffitt,
  J.~Provost, J.~Reiter, E.~J. Rhodes, F.~J. Rogers, I.~W. Roxburgh, M.~J.
  Thompson, and R.~K. Ulrich, \emph{The current state of solar modeling},
  Science \textbf{272} (1996), no.~5266, 1286--1292.

\bibitem{Costabel:90}
Martin Costabel, \emph{A remark on the regularity of solutions of {M}axwell's
  equations on {L}ipschitz domains}, Math. Methods Appl. Sci. \textbf{12}
  (1990), no.~4, 365--368. \MR{1048563 (91c:35028)}

\bibitem{BonnetBDMercierMillotPernetPeynaud:12}
A.S.Bonnet-Ben Dhia, J.F. Mercier, F.~Millot, S.~Pernet, and E.~Peynaud,
  \emph{Time-harmonic acoustic scattering in a complex flow: A full coupling
  between acoustics and hydrodynamics}, Communications in Computational Physics
  \textbf{11} (2012), no.~2, 555–572.

\bibitem{DS:79}
J~Dyson and BF~Schutz, \emph{Perturbations and stability of rotating stars. i.
  completeness of normal modes}, Proceedings of the Royal Society of London. A.
  Mathematical and Physical Sciences \textbf{368} (1979), no.~1734, 389--410.

\bibitem{FS:75b}
JL~Friedman and BF~Schutz, \emph{On the stability of relativistic systems}, The
  Astrophysical Journal \textbf{200} (1975), 204--220.

\bibitem{GLT:04}
Gwenael Gabard, Emmanuel Lefrancois, and Mabrouk~Ben Tahar, \emph{Aeroacoustic
  noise source simulations based on {G}albrun's equation}, 10th AIAA/CEAS
  Aeroacoustics Conference, 2004, p.~2892.

\bibitem{Galbrun:31}
Henri Galbrun, \emph{Propagation d'une onde sonore dans l'atmosphre et
  th\'eorie des zones de silence}, Gauthier-Villars, Paris, 1931.

\bibitem{GB:02}
L.~Gizon and A.~C. Birch, \emph{Time-distance helioseismology: The forward
  problem for random distributed sources}, The Astrophysical Journal
  \textbf{571} (2002), no.~2, 966.

\bibitem{gizon_etal:17}
Laurent Gizon, H\'{e}l\`{e}ne Barucq, Marc Durufl\'{e}, Chris Hanson, Michael
  Legu\`{e}be, Aaron Birch, Juliette Chabassier, Damien Fournier, Thorsten
  Hohage, and Emanuele Papini, \emph{Computational helioseismology in the
  frequency domain: acoustic waves in axisymmetric solar models with flows},
  Astronomy \& Astrophysics \textbf{600} (2017), A35.

\bibitem{GBS:10}
Laurent Gizon, Aaron~C. Birch, and Henk~C. Spruit, \emph{Local helioseismology:
  Three-dimensional imaging of the solar interior}, Annual Review of Astronomy
  and Astrophysics \textbf{48} (2010), no.~1, 289--338.

\bibitem{GizonEtal:17}
{Gizon, Laurent}, {Barucq, H\'el\`ene}, {Durufl\'e, Marc}, {Hanson, Chris S.},
  {Legu\`ebe, Michael}, {Birch, Aaron C.}, {Chabassier, Juliette}, {Fournier,
  Damien}, {Hohage, Thorsten}, and {Papini, Emanuele}, \emph{Computational
  helioseismology in the frequency domain: acoustic waves in axisymmetric solar
  models with flows}, A\&A \textbf{600} (2017), A35.

\bibitem{GoedbloedKeppensPoedts:19}
Hans Goedbloed, Rony Keppens, and Stefaan Poedts, \emph{Magnetohydrodynamics of
  laboratory and astrophysical plasmas}, Cambridge University Press, 2019.

\bibitem{Grisvard:85}
P.~Grisvard, \emph{Elliptic problems in nonsmooth domains}, Monographs and
  Studies in Mathematics, vol.~24, Pitman (Advanced Publishing Program),
  Boston, MA, 1985. \MR{775683}

\bibitem{Halla:19StekloffAppr}
Martin Halla, \emph{Electromagnetic {S}tekloff eigenvalues: approximation
  analysis}, Preprint, 2019, https://arxiv.org/abs/1909.00689.

\bibitem{Halla:19Tcomp}
\bysame, \emph{Galerkin approximation of holomorphic eigenvalue problems: weak
  {T}-coercivity and {T}-compatibility}, Preprint, 2019,
  https://arxiv.org/abs/1908.05029.

\bibitem{HaeggBerggren:19}
Linus Hägg and Martin Berggren, \emph{On the well-posedness of {G}albrun's
  equation}, Tech. report, 2019, https://arxiv.org/abs/1912.04364.

\bibitem{HS:11}
Tim~Van Hoolst and Paul Smeyers, \emph{Linear isentropic oscillations of stars:
  Theoretical foundations}, Springer Science \& Business Media Verlag, 2011.

\bibitem{hunter:77}
C~Hunter, \emph{On secular stability, secular instability, and points of
  bifurcation of rotating gaseous masses}, The Astrophysical Journal
  \textbf{213} (1977), 497--517.

\bibitem{LyndenBOstriker:67}
D.~{Lynden-Bell} and J.~P. {Ostriker}, \emph{{On the stability of
  differentially rotating bodies}}, Monthly Notices of the Royal Astronomical
  Society \textbf{136} (1967), 293.

\bibitem{UOAS:89}
Wasaburo Unno, Yoji Asaki, Hideyuki Saio, and Hiormoto Shibahashi,
  \emph{Nonradial oscillations of stars}, University of Tokyo Press, 1989.

\bibitem{Weber:80}
Ch. Weber, \emph{A local compactness theorem for {M}axwell's equations}, Math.
  Methods Appl. Sci. \textbf{2} (1980), no.~1, 12--25. \MR{561375 (81f:78005)}

\end{thebibliography}
\end{document}